\newtheorem{thm}{Theorem}[section]%[chapter]
\newtheorem{lemma}[thm]{Lemma}
\newtheorem{conjecture}[thm]{Conjecture}
\newtheorem{proposition}[thm]{Proposition}
\newtheorem{clm}[thm]{Claim}
\newcommand\ex{\ensuremath{\mathrm{ex}}}
\newcommand\rb{\ensuremath{\mathrm{rb}}}
\newcommand\cF{{\mathcal F}}
\newcommand\cH{{\mathcal H}}
\newcommand\cN{{\mathcal N}}
\newcommand{\ignore}[1]{}
\title{Rainbow copies of $F$ in families of $H$}
\author{Dániel Gerbner} 
\date{\small Alfr\'ed R\'enyi Institute of Mathematics\\
\small \texttt{gerbner.daniel@renyi.hu}}
\begin{document}

\maketitle

\begin{abstract}
    We study the following problem. How many distinct copies of $H$ can an $n$-vertex graph $G$ have, if $G$ does not contain a rainbow $F$, that is, a copy of $F$ where each edge is contained in a different copy of $H$? The case $H=K_r$ is equivalent to the Tur\'an problem for Berge hypergraphs, which has attracted several researchers recently. We also explore the connection of our problem to the so-called generalized Tur\'an problems. We obtain several exact results. In the particularly interesting symmetric case where $H=F$, we completely solve the case $F$ is the 3-edge path, and asymptitically solve the case $F$ is a book graph.
\end{abstract}

\section{Introduction}
Given a graph $G$ and a family $\cH$ of its subgraphs, we obtain an edge-coloring, where each subgraph in $\cH$ has its unique color and each edge of $G$ has a list of colors of all the subgraphs containing it. We say that a subgraph $F$ is \textit{rainbow} if we can pick a distinct color for each edge of $F$ from the corresponding list. Equivalently, $F$ is rainbow if there is an injection $\phi:E(G)\rightarrow\cH$ such that each edge is contained in its image. In other words, we pick the edges of $F$ from different members $\cH$. 

In this paper, $\cH$ will consist of copies of the same graph $H$.
Given $n$, $H$ and $F$, we are interested in the largest cardinality of a family $\cH$ of copies of $H$ in an $n$-vertex graph $G$ such that there is no rainbow $F$. We denote this number by
$\rb(n,H,F)$.

Let us mention some examples that (almost) fit our setting. Aharoni and Berger \cite{abe} showed that $2n-1$ mathings of size $n$ in bipartite graphs contain a rainbow matching of size $n$. Bar\'at, Gy\'arf\'as and S\'ark\"ozy \cite{bgs} studied how many matchings of size $m$ can be in a bipartite multigraph without a rainbow matching of size $m-k$. Aharoni, Briggs, Holzman and Jiang \cite{abhj} showed that every family of $2\lceil n/2\rceil-1$ odd cycles contain a rainbow odd cycle. Cheng, Han, Wang and Wang \cite{chww} studied the existence of rainbow $K_t$-factors. Note that in most of these examples the order of the graphs we count or forbid rainbow copies of increases with $n$.
Goorevitch and Holzman \cite{gh} considered fixed graphs and showed $\rb(n,K_3,K_3)=n^2/8$.

Observe that in most of the above examples, $H$ and $F$ was the same graph or the same family of graphs. Here we study the asymmetric version as well. Our main motivation to do that is that in the special case $H=K_r$, an equivalent problem has attracted a lot of attention in hypergraph Tur\'an theory.

Given a graph $F$, we say that a hypergraph $\cF$ is a \textit{Berge copy of $F$} (in short: \textit{Berge-$F$}) if there is a bijection $E(F)\rightarrow E(\cF)$ such that each edge is contained in its image. This is how Berge defined hypergraph cycles, and the definition was extended to arbitrary graphs by Gerbner and Palmer \cite{gp1}. Several papers studied the maximum number $\ex_r(n,\textup{Berge-}F)$ of hyperedges in an $r$-uniform Berge-$F$-free $n$-vertex hypergraph, see Subsection 5.2.2 of \cite{gp} for a slightly outdated survey.

Clearly we have that $\rb(n,K_r,F)=\ex_r(n,\textup{Berge-}F)$. One can see our problem as a generalization of Berge hypergraph where hyperedges are replaced by graphs.

Gy\H ori \cite{gyori} showed $\ex_3(n,\textup{Berge-}K_3)\le n^2/8$ if $n$ is large enough (implying most of the main result in \cite{gh}). Moreover, he studied multi-hypergraphs, thus his result imply the same bound if we allow a triangle to appear multiple times. This proves a conjecture from \cite{gh}.

Another related topic is generalized Tur\'an problems. We denote by $\cN(H,G)$ the number of copies of $H$ in $G$, and let $\ex(n,H,F)$ denote the largest $\cN(H,G)$ in $F$-free $n$-vertex graphs $G$. Note that $\ex(n,K_2,F)=\ex(n,F)$ is the ordinary Tur\'an problem. The systematic study of this generalized version was initiated by Alon and Shikhelman \cite{as} after several sporadic results, and has also attracted several researchers recently.

We can see our problem as a rainbow generalization of generalized Turán problems. Note that two different rainbow generalizations have already appeared in the literature \cite{gmmp,hp}, but there rainbowness comes from a proper edge-coloring, not the copies of $H$.

Clearly we have $\rb(n,H,F)\ge \ex(n,H,F)$, since an $F$-free graph cannot contain a rainbow $F$. We will show much stronger connection between the above two parameters.

We will often consider $F$ and $H$ fixed when $n$ grows. In particular, $o$ and $O$ are always used this way. 

The structure of the rest of the paper is the following. In Section 2, we generalize several known result on Berge hypergraphs to our setting. In particular, we present a connection to a variant of generalized Tur\'an numbers. In section 3, using the above mentioned connection, we show how some stabilty results on $\ex(,H,F)$ can imply to exact results on $\rb(n,H,F)$. In Section 4, we consider the symmetric case and determine $\rb(n,P_4,P_4)$ exactly. We also determine $\rb(n,B_t,B_t)$ asymptotically, where the \textit{book graph} $B_t$ consists of $t$ triangles sharing an edge. We finish the paper in Section 5 with some concluding remarks.

\section{Generalization of Berge results}

A simple technique often used in the theory of Berge hypergraphs is to pick the edges greedily. We show this by presenting the simplest example.

\begin{proposition}\label{gree}
    Let $G$ be a graph, $\cH$ a family of its subgraph, and $F$ be a subgraph such that each of its edges is in at least $|E(F)|$ elements of $\cH$. Then $F$ is rainbow.
\end{proposition}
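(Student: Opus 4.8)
The plan is to select the members of $\cH$ greedily, one per edge of $F$. Enumerate the edges of $F$ as $e_1,\dots,e_m$, where $m=|E(F)|$, in any fixed order. I will construct the desired injection $\phi$ edge by edge, maintaining the invariant that after processing $e_1,\dots,e_i$ I have chosen \emph{distinct} members $\phi(e_1),\dots,\phi(e_i)$ of $\cH$ with $e_j\in E(\phi(e_j))$ for each $j\le i$.

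For the inductive step, suppose $\phi(e_1),\dots,\phi(e_{i-1})$ have already been chosen. By hypothesis $e_i$ is contained in at least $m$ members of $\cH$. At most $i-1\le m-1$ of these have been used as images so far, so at least one member $H$ containing $e_i$ remains unused; set $\phi(e_i):=H$. This keeps $\phi$ injective and preserves the containment property, so the induction goes through, and after $m$ steps $\phi$ is an injection from $E(F)$ to $\cH$ with each edge contained in its image. By the definition of rainbow, this witnesses that $F$ is rainbow.

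There is no genuine obstacle here: the only thing to verify is that the greedy procedure never gets stuck, and this is exactly what the assumption buys us, since every edge lies in at least $|E(F)|$ members, which always strictly exceeds the number $i-1$ of members consumed before reaching $e_i$. (Equivalently, one could phrase the argument as an application of Hall's marriage theorem to the bipartite incidence graph between $E(F)$ and $\cH$: any nonempty $S\subseteq E(F)$ already satisfies $|N(S)|\ge |E(F)|\ge |S|$ by looking at the neighborhood of a single one of its edges, so Hall's condition holds and a matching saturating $E(F)$ exists.) I would present the greedy version as the primary proof, since it is the technique the surrounding text advertises.
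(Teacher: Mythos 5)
Your greedy argument is exactly the paper's proof: process the edges of $F$ in any order, and for each edge pick an unused member of $\cH$ containing it, which is always possible since each edge lies in at least $|E(F)|$ members while at most $|E(F)|-1$ have been consumed. The parenthetical Hall's-theorem remark is a fine aside, but the primary argument you give matches the paper's reasoning step for step.
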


\begin{proof}
    We go through the edges of $F$ in an arbitrary order. For each edge, we greedily pick a member of $\cH$ containing it that we have not picked earlier. It is doable, as even for the last edge, we can pick one of at least $|E(F)|$ edges, and at most $|E(F)|-1$ of them was picked earlier.
\end{proof}

The connection between Berge hypergraphs and generalized Tur\'an problems were established by Gerbner and Palmer \cite{gp2}, who proved that for any graph $F$, any $r$, and any $n$, we have 
\[
\ex(n,K_r,F)\le \ex_r(n,\textup{Berge-}F)\le \ex(n,K_r,F)+\ex(n,F).
\]
A strengthening of the result was proved by Gerbner, Methuku and Palmer \cite{gmp}. We say that a graph is \textit{red-blue} if each of its edges is colored red or blue. Given a red-blue graph $G$, we denote by $G_{red}$ and (respectively, $G_{blue}$) the subgraph spanned by the red (resp. blue) edges. Let $\cN^{{\mathrm col}}(H_1,H_2;G)=\cN(H_1,G_{red})+\cN(H_2,G_{blue})$, the number of red copies of $H_1$ and blue copies of $G_2$. Let $\ex^{{\mathrm col}}(n,H_1,H_2;F)$ denote the largest $\cN^{{\mathrm col}}(H_1,H_2;G)$ in $F$-free $n$-vertex red-blue graphs. Gerbner, Methuku and Palmer \cite{gmp} proved that $\ex_r(n,\textup{Berge-}F)\le \ex^{{\mathrm col}}(n,K_r,K_2;F)$. Note that an equivalent statement was proved by  F\"uredi, Kostochka, and Luo~\cite{fkl}.

We can generalize both above results to our setting. Clearly the second one implies the first, but we present a proof of the first as well, since it is simple and makes the second proof easier to understand.

\begin{lemma}\label{celeb}
    For any graphs $H$ and $F$ and for any $n$, we have $\ex(n,H,F)\le \rb(n,H,F)\le \ex(n,H,F)+\ex(n,F)$.
\end{lemma}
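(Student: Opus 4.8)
The lower bound $\ex(n,H,F)\le\rb(n,H,F)$ is immediate, as was already noted above: I would take an $n$-vertex $F$-free graph $G$ achieving $\cN(H,G)=\ex(n,H,F)$ and let $\cH$ be the family of \emph{all} copies of $H$ in $G$. Since $G$ contains no copy of $F$ at all, it \emph{a fortiori} contains no rainbow $F$, so this pair $(G,\cH)$ witnesses $\rb(n,H,F)\ge|\cH|=\ex(n,H,F)$.

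For the upper bound the plan is to adapt the greedy argument underlying the Gerbner--Palmer bound $\ex_r(n,\textup{Berge-}F)\le\ex(n,K_r,F)+\ex(n,F)$. Let $G$ be an $n$-vertex graph and $\cH$ a family of copies of $H$ in $G$ with no rainbow $F$. I would fix an arbitrary ordering of $\cH$ and process its members one at a time, maintaining a subgraph $G_1\subseteq G$ that is initially edgeless. When processing a copy $H'\in\cH$: if every edge of $H'$ already lies in $G_1$, place $H'$ in a family $\cH_R$; otherwise $H'$ has an edge $e\notin E(G_1)$, in which case I add one such edge $e$ to $G_1$, declare it the \emph{representative} of $H'$, and place $H'$ in $\cH_B$. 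This partitions $\cH=\cH_R\sqcup\cH_B$. By construction each member of $\cH_B$ contributes exactly one new edge to $G_1$, so $|\cH_B|=|E(G_1)|$; and each member of $\cH_R$ is a copy of $H$ all of whose edges lie in the final graph $G_1$, whence (distinct copies remaining distinct) $|\cH_R|\le\cN(H,G_1)$.

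The crux is to show that $G_1$ is $F$-free, and this is precisely where the no-rainbow hypothesis enters. Suppose $G_1$ contained a copy of $F$. Every edge of this copy lies in $G_1$, and every edge of $G_1$ was added as the representative of a unique member of $\cH$ containing it; since distinct edges were added in distinct steps, these representatives are distinct members of $\cH$. Mapping each edge of the copy of $F$ to its representative then yields an injection from $E(F)$ into $\cH$ in which each edge is contained in its image --- that is, a rainbow $F$, contradicting our assumption. Hence $G_1$ is an $F$-free graph on at most $n$ vertices, so $|E(G_1)|\le\ex(n,F)$ and $\cN(H,G_1)\le\ex(n,H,F)$. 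Combining, $|\cH|=|\cH_B|+|\cH_R|\le\ex(n,F)+\ex(n,H,F)$, as required.

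I expect the only genuine subtlety to lie in the bookkeeping for the $F$-freeness step: I must guarantee that each edge of $G_1$ is tied to a single, distinct copy of $H$, so that the induced map out of $E(F)$ is honestly injective and really produces a rainbow $F$. Once the invariant ``each $\cH_B$-step introduces exactly one fresh representative edge'' is set up, this injectivity is automatic, the identity $|E(G_1)|=|\cH_B|$ is clear, and both extremal bounds follow at once.
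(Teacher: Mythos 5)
Your proof is correct and follows essentially the same route as the paper's: the same extremal construction for the lower bound, and for the upper bound the same greedy pass through $\cH$ that either assigns a fresh representative edge (bounded by $\ex(n,F)$ via $F$-freeness of the picked graph) or identifies a copy of $H$ lying inside that graph (bounded by $\ex(n,H,F)$). Your write-up is in fact slightly more explicit than the paper's, since you spell out why the representative map yields a rainbow $F$ — a step the paper leaves implicit.
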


\begin{proof}
    To prove the first inequality, we take all the $\ex(n,H,F)$ copies of $H$ in an $F$-free $n$-vertex graph. Clearly there is no rainbow copy of $F$ there. To prove the second inequality, we take an $n$-vertex graph $G$ and a family $\cH$ of copies of $H$ in $G$. We go through the members of $\cH$ in an arbitrary order, and take a previously not picked edge from each of them, if possible. If not possible (because all the edges of the copy of $H$ have been picked earlier), then we mark the copy of $H$. The edges picked form an $F$-free graph $G'$. The copies of $H$ we could not pick an edge from are each subgraphs of $G'$. This shows that we could pick an edge at most $\ex(n,F)$ times and we could not pick an edge at most $\ex(n,H,F)$ times.
\end{proof}

Observe that in the above proof, we picked a maximal matching between the copies of $H$ in $G$ and edges of $G$ greedily. To improve this, we will pick a largest matching.
We follow the formulation of the proof as in \cite{gerbner}. In particular, we use the following lemma.

\begin{lemma}\label{cel2}[\cite{gerbner}] Let $\Gamma$ be a finite bipartite graph with parts $A$ and $B$ and let $M$ be a largest matching in $\Gamma$. Let $B'$ denote the set of vertices in $B$ that are incident to $M$. Then we can partition $A$ into $A_1$ and $A_2$ and partition $B'$ into $B_1$ and $B_2$ such that for $a\in A_1$ we have $M(a)\in B_1$, and every neighbor of the vertices of $A_2$ is in $B_2$. 
%Moreover, every vertex in $A_1$ has a neighbor in $B\setminus B'$.
\end{lemma}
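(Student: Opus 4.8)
The plan is to run the classical alternating-path argument behind König's theorem, anchored at the unmatched vertices of $A$. Let $U \subseteq A$ be the set of vertices left unmatched by $M$, and let $R$ be the collection of all vertices of $\Gamma$ reachable from $U$ along $M$-alternating paths (paths that start at a vertex of $U$ with a non-matching edge and then alternate between matching and non-matching edges). I would then set $A_2 := A \cap R$ and $B_2 := B \cap R$, and define $A_1 := A \setminus A_2$ and $B_1 := B' \setminus B_2$. By construction $A = A_1 \sqcup A_2$ is a partition, and once I verify $B_2 \subseteq B'$ the same will hold for $B' = B_1 \sqcup B_2$.

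The first thing to check is exactly this inclusion $B_2 \subseteq B'$, i.e.\ that every $B$-vertex reachable from $U$ is matched by $M$. If some $M$-unmatched $b \in B$ were reachable, the alternating path from $U$ to $b$ would be an $M$-augmenting path, contradicting that $M$ is a largest matching; this is the single place where maximality of $M$ enters. The inclusion $N(A_2) \subseteq B_2$ is then immediate from the definition of $R$: if $a \in A_2$ is reachable and $ab \in E(\Gamma)$, appending the edge $ab$ to an alternating path reaching $a$ exhibits $b \in R$, whence $b \in B_2$.

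It remains to establish the two properties of $A_1$. Any $a \in A_1$ must be matched, since every unmatched vertex of $A$ lies in $U \subseteq R$ and would otherwise belong to $A_2$. I would finish by showing $M(a) \in B_1$ for each $a \in A_1$, which is the step that genuinely uses the parity of alternating paths and is where I expect the only real subtlety. Suppose instead $M(a) \in B_2 = B \cap R$. A $B$-vertex can enter $R$ only via a non-matching edge, so some $a' \neq a$ reaches $M(a)$ along an alternating path; extending that path by the matching edge $M(a)\,a$ forces $a \in R$, hence $a \in A_2$, contradicting $a \in A_1$. The delicate point throughout is bookkeeping the parity — that $B$-vertices are always reached by non-matching edges and matched $A$-vertices by matching edges — so that extending a path by a matching edge is always legitimate and (by maximality) never terminates at an unmatched $B$-vertex.
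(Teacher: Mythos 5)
Your proof is correct. Note that the paper does not actually prove Lemma \ref{cel2} --- it quotes it from \cite{gerbner} --- so there is no in-paper argument to compare against; your construction is the classical K\H onig-type alternating-path argument, which is essentially the proof given in the cited reference. All the key points are in place: maximality of $M$ is invoked exactly once (no augmenting path exists, hence every reachable $B$-vertex is matched, giving $B_2 \subseteq B'$), and the parity bookkeeping (matched $A$-vertices entered by matching edges, $B$-vertices by non-matching edges) correctly yields both $N(A_2)\subseteq B_2$ and $M(a)\in B_1$ for all $a\in A_1$.
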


\begin{lemma}\label{celeb2}
    For any graphs $H$ and $F$ and for any $n$, we have $\rb(n,H,F)\le \ex^{{\mathrm col}}(n,H,K_2;F)$.
\end{lemma}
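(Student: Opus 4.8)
The plan is to refine the greedy argument of Lemma~\ref{celeb} by replacing its maximal matching with a \emph{largest} matching and feeding that into the decomposition of Lemma~\ref{cel2}. Fix an $n$-vertex graph $G$ and a family $\cH$ of copies of $H$ with no rainbow $F$. First I would build the bipartite \emph{incidence graph} $\Gamma$ whose parts are $A=\cH$ and $B=E(G)$, joining a copy $\cC\in\cH$ to an edge $e$ exactly when $e\in E(\cC)$; thus the neighborhood in $\Gamma$ of a copy $\cC$ is precisely its edge set $E(\cC)$. Let $M$ be a largest matching of $\Gamma$ and let $B'\subseteq B$ be its set of matched edges. Applying Lemma~\ref{cel2} yields partitions $A=A_1\cup A_2$ and $B'=B_1\cup B_2$ with $M(a)\in B_1$ for every $a\in A_1$, and with every neighbor of $A_2$ lying in $B_2$.

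The crucial step is to observe that the graph spanned by the matched edges $B'$ is $F$-free. Indeed, suppose it contained a copy of $F$ with edges $e_1,\dots,e_k$. Each $e_i$ is matched, so $e_i=M(\cC_i)$ for a unique $\cC_i\in\cH$ with $e_i\in E(\cC_i)$, and since $M$ is a matching the $\cC_i$ are pairwise distinct. Then $e_i\mapsto\cC_i$ is an injection from $E(F)$ into $\cH$ placing each edge inside its image, that is, a rainbow $F$, contradicting the hypothesis. This is exactly where the no-rainbow-$F$ assumption enters, and I expect it to be the conceptual heart of the argument.

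Next I would turn $B'$ into a red-blue graph $G'$ on vertex set $V(G)$ by coloring the edges of $B_2$ red and those of $B_1$ blue; as a subgraph of the $F$-free graph spanned by $B'$, the graph $G'$ is $F$-free. Now I count. Every copy $\cC\in A_2$ has all of its edges among its $\Gamma$-neighbors, hence in $B_2$, so $\cC$ is a red copy of $H$ in $G'$; since the members of $A_2$ are distinct copies of $H$, this gives $\cN(H,G'_{red})\ge|A_2|$. On the other side $M$ maps $A_1$ injectively into $B_1$, so the number of blue edges satisfies $|B_1|\ge|A_1|$. Combining,
\[
\cN^{{\mathrm col}}(H,K_2;G')=\cN(H,G'_{red})+|B_1|\ge |A_2|+|A_1|=|\cH|.
\]
Since $G'$ is an $F$-free red-blue graph, the left-hand quantity is at most $\ex^{{\mathrm col}}(n,H,K_2;F)$, and taking $\cH$ extremal yields the claim.

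The main obstacle is matching the abstract output of Lemma~\ref{cel2} to the correct count: one must verify that $A_1$ and $A_2$ feed into genuinely different terms of $\cN^{{\mathrm col}}$ (blue edges versus red copies of $H$), that these contributions are additive, and that together they exhaust $\cH$. A secondary point to confirm is that distinct members of $A_2$ give distinct red copies of $H$, which holds because $\cH$ is a family of distinct copies of $H$.
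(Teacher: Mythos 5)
Your proof is correct and follows essentially the same route as the paper: the same incidence bipartite graph $\Gamma$ between $\cH$ and $E(G)$, the same application of Lemma~\ref{cel2} to a largest matching, the same coloring of $B_1$ blue and $B_2$ red, and the same count $|\cH|=|A_1|+|A_2|\le|E(G'_{blue})|+\cN(H,G'_{red})$. In fact you spell out the one step the paper leaves implicit, namely why the matched edges $B'$ span an $F$-free graph (a copy of $F$ in $B'$ would be rainbow via the matching), so nothing is missing.
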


\begin{proof}
    Let $G$ be an $n$-vertex graph, $\cH$ be a collection of copies of $H$ in $G$ such that $G$ is rainbow $F$-free. Let part $A$ of $\Gamma$ be $\cH$ and part $B$ be $E(G)$, and we join $a\in A$ to $b\in B$ if $a$ contains $b$. Now we apply Lemma \ref{cel2} to $\Gamma$ and an arbitrary largest matching $M$. The elements of $B'$ form an $F$-free graph $G'$. We color the edges in $B_1$ blue and edges of $B_2$ red. We have $|\cH|=|A_1|+|A_2|=|B_1|+|A_2|=|E(G_{blue})|+|A_2|$. As the copies of $H$ in $A_2$ have all their neighbors in $B_2$, they are red, showing $|A_2|\le N(K_r,G_{red})$.
\end{proof}

Let us remark that $\ex^{{\mathrm col}}(n,H_1,H_2;F)$ (and more generally an $r$-colored version) was studied in \cite{gerbner2}.

The Ramsey number $R(H,G)$ is the smallest integer $r$ such that in any red-blue coloring of $K_r$ there is either a red $H$ or a blue $G$. Let $F\setminus e$ denote a graph obtained by deleting an arbitrary edge $e$ from $F$.
Gr\'osz, Methuku and Tompkins \cite{GMT} proved that if $r\ge R(F,F\setminus e)$, then $\ex_r(n,\textup{Berge-}F)=o(n^2)$. This generalizes to our setting as follows.

\begin{proposition}
\textbf{(i)}    Let us assume that $H$ contains $F$. Then $\rb(n,H,F)=O(n^2)$.

\textbf{(ii)}    Let us assume that in any red-blue coloring of $H$, there is either a monored $F$ or a monoblue $F\setminus e$ for some edge $e$ of $F$. Then $\rb(n,H,F)=o(n^2)$.
\end{proposition}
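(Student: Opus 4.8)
For part (i), I would simply invoke Lemma~\ref{celeb2}. Let $G$ be an $F$-free red-blue graph on $n$ vertices witnessing $\ex^{{\mathrm col}}(n,H,K_2;F)$. Then $G_{red}$ is $F$-free, and since $H$ contains $F$, any copy of $H$ contains a copy of $F$; hence $G_{red}$ contains no copy of $H$, so $\cN(H,G_{red})=0$. Consequently $\cN^{{\mathrm col}}(H,K_2;G)=\cN(K_2,G_{blue})\le \binom{n}{2}$, giving $\rb(n,H,F)\le \ex^{{\mathrm col}}(n,H,K_2;F)\le \binom{n}{2}=O(n^2)$. (Equivalently one could use Lemma~\ref{celeb} with $\ex(n,H,F)=0$.)

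For part (ii), I would first note that applying the hypothesis to the all-red colouring of $H$ (which has no blue $F\setminus e$ once $|E(F)|\ge 2$) forces a red $F$, so $H$ contains $F$ and part (i) already gives $\rb(n,H,F)=O(n^2)$; the task is to sharpen this to $o(n^2)$ by generalizing Gr\'osz--Methuku--Tompkins. Let $G$ and $\cH$ realize $\rb(n,H,F)$, and assume every edge of $G$ lies in a member of $\cH$. Put $t=|E(F)|$ and call an edge \emph{heavy} if it lies in at least $t$ members of $\cH$ and \emph{light} otherwise; let $R$ be the light graph. By Proposition~\ref{gree} the heavy graph is $F$-free (a heavy $F$ would be rainbow), and the same greedy argument shows that any copy of $F$ in $G$ with at most one light edge is rainbow; so every copy of $F$ in $G$ has at least two light edges.

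The core of the plan handles the copies of $H$ that, under the heavy/light colouring, contain an \emph{all-light} copy of $F$ (the ``red $F$'' alternative). Suppose for contradiction $|\cH|\ge \epsilon n^2$. Mapping each such copy of $H$ to an all-light $F$ inside it, and using that a light edge lies in fewer than $t$ members (so each all-light $F$ is hosted by fewer than $t$ copies of $H$), I would obtain at least $|\cH|/t=\Omega(n^2)$ distinct all-light copies of $F$, each \emph{hosted} inside a member of $\cH$. Now apply the graph removal lemma to $R$ with a small parameter $\epsilon''$. If $\cN(F,R)\ge \delta n^{|V(F)|}$, a rainbow copy of $F$ must exist: a non-rainbow copy has, by Hall's theorem, two of its edges inside a common member, which confines three of its vertices to that member's vertex set, so (using $|\cH|=O(n^2)$ from part (i)) there are only $O(n^{|V(F)|-1})=o(n^{|V(F)|})$ such copies, a contradiction. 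Otherwise $R$ becomes $F$-free after deleting a set $D$ of fewer than $\epsilon'' n^2$ light edges; each of the $\Omega(n^2)$ hosted copies meets $D$, while a fixed light edge is met by only $O(1)$ hosted copies (they sit inside the fewer than $t$ members containing it), forcing $\Omega(n^2)\le O(\epsilon'' n^2)$, again a contradiction for small $\epsilon''$. Thus the copies carrying an all-light $F$ number $o(n^2)$.

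The remaining, and I expect hardest, part is the copies of $H$ that carry \emph{only} the other alternative: an all-heavy $F\setminus e$ whose completing pair is a non-edge. In the clique case $H=K_r$ these cannot occur, since the completing pair of any $F\setminus e$ inside $K_r$ is itself an edge and yields a rainbow $F$ greedily — this recovers the Gr\'osz--Methuku--Tompkins theorem. For general $H$ they genuinely arise (for example $H=K_{2,2,2}$, $F=K_3$, colouring the edges between two parts heavy), and such copies need not contain any all-light $F$, so the argument above does not detect them. Bounding their number by $o(n^2)$ would require a separate argument on the heavy graph, which is $F$-free but may contain many non-completable copies of $F\setminus e$; I anticipate this being the main obstacle, to be handled by a second removal-lemma argument tailored to $F\setminus e$ inside the $F$-free heavy graph.
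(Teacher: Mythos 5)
Your part (i) is correct and is the paper's argument (the paper quotes Lemma \ref{celeb}; your route through Lemma \ref{celeb2} is equivalent). The core of your part (ii) — the heavy/light threshold $|E(F)|$, the observation that every copy of $F$ in $G$ is non-rainbow and hence has three vertices confined to a single member, giving the $O(n^{|V(F)|-1})$ count, the removal lemma, and the fact that a light edge lies in fewer than $|E(F)|$ members — is also correct and is essentially the paper's proof. The gap is exactly the case you leave open, and the paper's proof shows that this case should never arise: the paper applies the hypothesis to the light/heavy coloring of a single member $H_0$ and kills the monoblue alternative outright. If $H_0$ contains an all-heavy $F\setminus e$, one greedily builds a rainbow $F\setminus e$ from members of $\cH\setminus\{H_0\}$ (each heavy edge lies in at least $|E(F)|$ members, hence in at least $|E(F)|-1$ members other than $H_0$), and then uses $H_0$ itself to supply the edge $e$, producing a rainbow $F$. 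For this extension to make sense, the missing edge $e$ must be an edge of $H_0$; that is, the hypothesis is used in the form ``every red-blue coloring of $H$ has a monored $F$ or a copy of $F$ all of whose edges except one are blue,'' not ``a blue copy of the abstract graph $F\setminus e$ sits somewhere in $H$.'' Under that reading your third case is empty, every member contains a monored $F$, and your own core argument finishes the proof. (Note this reading still specializes to Gr\'osz--Methuku--Tompkins for $H=K_r$, where the two readings coincide.)

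Under your weaker, literal reading, the case you flag is not merely the hardest part: the statement becomes false, so the ``second removal-lemma argument'' you anticipate cannot exist. Your own example $H=K_{2,2,2}$, $F=K_3$ satisfies the literal hypothesis (if no two blue edges are adjacent, the blue edges form a matching, which meets at most $6$ of the $8$ triangles, leaving a monored $K_3$), yet $\rb(n,K_{2,2,2},K_3)=\Theta(n^2)$: take disjoint $4$-sets $A_i=\{a_i,a_i',b_i,b_i'\}$ for $i\le\Theta(n)$ and disjoint pairs $C_j=\{c_j,c_j'\}$ for $j\le\Theta(n)$, and let $\cH$ consist of the complete tripartite graphs $M_{ij}$ with parts $\{a_i,a_i'\}$, $\{b_i,b_i'\}$, $\{c_j,c_j'\}$. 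Every triangle of the union graph has the form $abc$ with $a\in\{a_i,a_i'\}$, $b\in\{b_i,b_i'\}$, $c\in C_j$, and its two edges at $c$ lie in the single member $M_{ij}$ and in no other member, so no triangle is rainbow, while $|\cH|=\Theta(n^2)$. In this construction the heavy edges of each member form precisely the $K_{2,2}$ of your coloring, and every all-heavy $P_3$ has a non-edge as its completing pair — your problematic configuration realized. So the correct resolution of your obstacle is not a further argument on the heavy graph but the stronger reading of the Ramsey-type hypothesis, which is what the paper's proof implicitly requires.
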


\begin{proof}
 Observe that \textbf{(i)} is a simple corollary of Lemma \ref{celeb}. To prove \textbf{(ii)}, we pick an element of $\cH$ and we denote it by $H_0$. Let us color the edges of $H_0$ red if they are contained in less than $|E(F)|$ elements of $\cH$ and blue otherwise. 
 
 We claim that there is no blue $F\setminus e$. Indeed, otherwise we can find a rainbow $F\setminus e$ in $\cH\setminus \{H_0\}$ by Proposition \ref{gree}, and this is extended to a rainbow $F$ using $H_0$ for the edge $e$.
 This implies that $H_0$ contains a red $F$. 
 
 Let us bound the number of copies of $F$ in $G$. Each copy of $F$ contains at least two edges, thus at least three vertices from an element of $\cH$. This means we can pick every $F$ the following way. We pick an element of $\cH$ ($O(n^2)$ ways), three vertices of that element ($O(1)$ ways), $|V(F)|-3$ arbitrary other elements $(O(n^{|V(F)|-3})$ ways), and then a copy of $F$ on the $|V(F)|$ vertices ($O(1)$ ways). Therefore, there are $O(n^{|V(F)|-1})$ copies of $F$ in $G$.

 By the removal lemma, there is a set $E_0$ of $o(n^2)$ edges such that each copy of $F$ contains an element of $E_0$. As each element of $\cH$ contains a red $F$, it also contains a red edge in $E_0$. There are $o(n^2)$ red edges in $E_0$ and each are contained in at most $|E(F)|-1$ elements of $\cH$, thus there are $o(n^2)$ elements in $\cH$.
\end{proof}

\section{Proofs using stability}

In this section we prove exact results on $\rb(n,H,F)$ using stability results on $\ex(n,H,F)$. Most of our statements will actually show that $\ex^{col}(n,H,K_2:F)=\ex(n,H,F)$.

Most stability results on $\ex(n,H,F)$ belong to one of two types. In the first type we know an extremal graph $G$ with $\cN(H,G)=\ex(n,H,F)$ (or a family of extremal graphs), and we know that if an $F$-free graph $G'$ is not a subgraph of $G$, then $G$ contains $\cN(H,G)-\Omega(n^{|V(H)|-1})$ copies of $H$. This immediately implies that $\rb(n,H,F)=\ex^{col}(n,H,K_2:F)=\ex(n,H,F)$ in conjunction with Lemma \ref{celeb2}, if $|V(H)|\ge 4$. Such stability result can be found e.g. in \cite{gepat} concerning $\ex(n,K_{a,b},K_{s,t})$ for some values of $a,b,s,t$.

The other type of stability results is a generalization of the Erd\H os-Simonovits stability \cite{erd1,erd2,sim} for ordinary Tur\'an problems. The \textit{edit distance} of two graphs $G$ and $G'$ on the same vertex set is the smallest number of edges that we can add and delete from $G$ to obtain $G'$. Given graphs $H$ and $F$ with $\chi(H)<\chi(F)$, we say that $H$ is \textit{$F$-Tur\'an-stable} if for any $n$-vertex graph $G$ with $\cN(H,G)\ge \ex(n,H,F)-o(n^2)$, $G$ has edit distance $o(n^2)$ from the Tur\'an graph $T(n,r)$. We say that $H$ is \textit{weakly $F$-Tur\'an-stable} if for any $n$-vertex graph $G$ with $\cN(H,G)\ge \ex(n,H,F)-o(n^2)$, $G$ has edit distance $o(n^2)$ from a complete $(\chi(F)-1)$-partite graph.

The first such stability result in generalized Tur\'an problems is due to Ma and Qiu \cite{mq} who showed that $K_r$ is $F$-Tur\'an-stable for every $F$ with chromatic number more than $r$. Other results appear in \cite{hhl,gerb2,gerb3,lm}. These results were used to prove sharp bounds on $\ex(n,H,F)$. In particular the author showed in \cite{gerb2} that if $H$ is weakly $F$-Tur\'an-stable and $F$ has a color-critical edge (an edge whose deletion decreases the chromatic number), then $\ex(n,H,F)=\cN(H,T)$ for a complete $(\chi(F)-1)$-partite graph $T$. Some exact results were obtained in \cite{gerb3} in the case $F$ does not have a color-critical edge.

\begin{thm}
    If $H$ is weakly $F$-Tur\'an-stable and $F$ has a color-critical edge, then $\rb((n,H,F)=\ex(n,H,F)=\ex^{{\mathrm col}}(n,H,K_2;F)=\cN(H,T)$ for a complete $(\chi(F)-1)$-partite graph $T$.
\end{thm}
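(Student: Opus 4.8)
The plan is to establish the chain of equalities by combining the upper bound from Lemma~\ref{celeb2} with the existing exact result on $\ex(n,H,F)$ and a lower bound coming from the extremal construction. The key observation is that $\ex(n,H,F)=\cN(H,T)$ for a complete $(\chi(F)-1)$-partite graph $T$ is already known (this is the result the author cites from \cite{gerb2}, valid because $H$ is weakly $F$-Tur\'an-stable and $F$ has a color-critical edge). So the only genuine work is to show $\ex^{\mathrm{col}}(n,H,K_2;F)\le \ex(n,H,F)$; the reverse inequality $\ex^{\mathrm{col}}(n,H,K_2;F)\ge \ex(n,H,F)$ is immediate by coloring every edge of the extremal graph red, and the inequality $\rb(n,H,F)\le \ex^{\mathrm{col}}(n,H,K_2;F)$ is precisely Lemma~\ref{celeb2}. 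Together with $\rb(n,H,F)\ge \ex(n,H,F)$ from Lemma~\ref{celeb}, all four quantities would be squeezed equal.

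First I would take an $F$-free red-blue graph $G$ on $n$ vertices achieving $\ex^{\mathrm{col}}(n,H,K_2;F)$, so that $\cN(H,G_{red})+|E(G_{blue})|$ is as large as possible. Since $G$ itself is $F$-free, we have $\cN(H,G)\le \ex(n,H,F)$, and we want to argue that adding the count of blue edges cannot push the colored quantity strictly above $\ex(n,H,F)$. The natural route is a stability argument: if $\cN(H,G)$ is close to $\ex(n,H,F)$, then by weak $F$-Tur\'an-stability $G$ is within edit distance $o(n^2)$ of a complete $(\chi(F)-1)$-partite graph, and in such a near-extremal structure the blue edges can only contribute a lower-order term relative to $\cN(H,G_{red})$, so that the colored objective is dominated by $\cN(H,T)$. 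Conversely, if $\cN(H,G)$ is bounded away from $\ex(n,H,F)$ by $\Omega(n^2)$, then even crediting all $\binom{n}{2}=O(n^2)$ edges as blue cannot recover the deficit once the constant is controlled, so this regime cannot be extremal either.

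The main obstacle I anticipate is making the stability case quantitatively tight rather than merely asymptotic. The hypotheses only give a stability statement with $o(n^2)$ error, whereas the target is an exact equality $\ex^{\mathrm{col}}(n,H,K_2;F)=\cN(H,T)$. To bridge this gap I would expect to need the exact extremal characterization from \cite{gerb2}, namely that the unique (up to the near-extremal class) maximizer is the Turán-type graph $T$, together with a careful local-exchange or shifting argument: starting from a near-extremal $G$, one recolors and re-routes edges to show that any blue edge either already lies in the complete multipartite skeleton (in which case recoloring it red and completing to $T$ only increases the $H$-count, using the color-critical edge to guarantee $F$-freeness is preserved) or can be charged against a deficit in the red $H$-count that strictly exceeds its unit contribution. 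The color-critical edge of $F$ is what allows one to add edges within parts of $T$ without creating a copy of $F$ being the delicate point, and verifying that the exchange never creates an $F$ while strictly improving the objective is where the bulk of the care will lie.

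Once $\ex^{\mathrm{col}}(n,H,K_2;F)\le\cN(H,T)$ is secured, I would close the argument by assembling the inequalities:
\[
\cN(H,T)=\ex(n,H,F)\le \rb(n,H,F)\le \ex^{\mathrm{col}}(n,H,K_2;F)\le \cN(H,T),
\]
where the first equality is the cited result from \cite{gerb2}, the second inequality is the lower bound in Lemma~\ref{celeb}, the third is Lemma~\ref{celeb2}, and the last is the bound just established. This forces equality throughout and yields the theorem.
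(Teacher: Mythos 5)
Your reduction is the same as the paper's: by Lemma~\ref{celeb}, Lemma~\ref{celeb2} and the exact result $\ex(n,H,F)=\cN(H,T)$ cited from \cite{gerb2}, everything comes down to proving $\ex^{\mathrm{col}}(n,H,K_2;F)\le\cN(H,T)$, and like the paper you want to do this via stability plus a charging argument. The problem is that your proposal stops exactly where the proof's real content begins: the charging step is described only as a hoped-for ``local-exchange or shifting argument'' and you explicitly defer its verification (``where the bulk of the care will lie''). The paper carries it out concretely. Taking an extremal $F$-free red-blue graph $G$, stability (plus a lemma of \cite{gerb2} guaranteeing all parts have order $\Theta(n)$) places $G$ within edit distance $o(n^2)$ of a complete $(\chi(F)-1)$-partite graph $T$; then (i) each of the $x$ blue edges between parts is charged $\Omega(n^{|V(H)|-2})$ lost red copies of $H$, each copy being counted $O(1)$ times, and (ii) a blue edge $uv$ inside a part is shown to force $\Omega(n)$ missing cross edges, by greedily selecting $|V(F)|$ common neighbors of the previously chosen vertices in each part: if the selection never fails, one embeds $F$ into the $F$-free graph $G$, and this embedding is precisely where the color-critical-edge hypothesis enters. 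The resulting loss of $\Omega(xn+n^2)$ red copies of $H$ (for $|V(H)|\ge 3$; $|V(H)|=2$ is trivial) outweighs the gain of $x+o(n^2)$ blue edges. None of this counting appears in your write-up, so what you have is a plan, not a proof.

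Beyond the missing core, two of your stated steps are wrong. First, you have the role of the color-critical edge backwards: you write that it ``allows one to add edges within parts of $T$ without creating a copy of $F$,'' but the truth is the opposite --- since $F\setminus e$ has chromatic number $\chi(F)-1$, the graph $T$ (with large parts) plus any edge inside a part \emph{does} contain a copy of $F$. That is exactly why your exchange ``recolor the blue edge red and complete to $T$'' cannot handle blue edges inside parts, and why the paper instead argues that such an edge forces $\Omega(n)$ missing cross edges in the $F$-free graph $G$. Second, your fallback case is quantitatively false: if $\cN(H,G)\le\ex(n,H,F)-\epsilon n^2$, the blue contribution, which can be as large as $\binom{n}{2}\sim n^2/2$, may well recover a deficit of $\epsilon n^2$; there is no way of ``controlling the constant.'' The paper avoids this by putting the threshold at the right scale: a deficit of $\Omega(n^{|V(H)|})$ cannot be recovered by $O(n^2)$ blue edges because $|V(H)|\ge 3$ in the nontrivial case, and in the complementary regime the stability hypothesis applies. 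As written, deficits of order $\Theta(n^2)$ with a small constant fall through both of your cases, so your dichotomy does not close.
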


\begin{proof}
    Let $G$ be an $n$-vertex graph such that $G$ is $F$-free and  $\cN^{{\mathrm col}}(H,K_2;G)=\ex^{{\mathrm col}}(n,H,K_2;F)$. If $\cN(H,G)=\ex(n,H,F)-\Omega(n^{|V(H)|}$, then $\cN^{{\mathrm col}}(H,K_2;G)<\ex(n,H,F)\le \ex^{{\mathrm col}}(n,H,K_2;F)$, a contradiction. Therefore, by the weak Tur\'an stability, we have that $G$ has edit distance $o(n^2)$ from a complete $(\chi(F)-1)$-partite graph $T$. Let $A_i$ denote the parts of $T$. A lemma in \cite{gerb2} ensures that each part $A_i$ has order $\Theta(n)$.
    
    Assume first that there are $x$ blue edges between the parts of $T$. Then, compared to the monored $T$, we lose $\Omega(n^{|V(H)|-2})$ red copies of $H$ for each such edge. As each copy of $H$ is counted $O(1)$ times, we lose $\Omega(xn^{|V(H)|-2})$ red copies of $H$. We are done if there is no blue edge inside any part.
    %Since there are $x+o(n^2)$ blue edges, we obtain a contradiction if $|V(H)|\ge 4$. Moreover, if $|V(H)|=3$ ???...

    Assume now that there is a blue edge $uv$ inside a part, say, $A_1$. Then we claim that $\Omega(n)$ edges are missing between parts. Indeed, let us pick $u,v$ and $|V(F)|$ vertices from $A_1$. Then we pick $|V(F)|$ vertices from their common neighborhood in $A_2$ if possible. We continue this way and for each $i$, we pick $|V(F)|$ vertices in $A_i$ that are in the common neighborhood of the vertices picked earlier. If it is always possible, the resulting subgraph contains a copy of $F$, a contradiction. Thus at one point the vertices picked earlier have less than $|V(F)|$ common neighbors in $A_i$, thus the other $\Omega(n)$ vertices of $A_i$ each have a non-neighbor in the other parts.

    This implies that compared to the monored $T$, we lose $\Omega(n^{|V(H)|-1})$ copies of $H$. If $|V(H)|=2$, the statement is trivial. If $|V(H)|\ge 3$, then we have $x+o(n^2)$ blue edges and we lose $\Omega(xn+n^2)$ red copies of $H$, completing the proof.    
    %... nagy matching vagy star, vagy elég hogy lin él hiányzik ha $y>0$?
\end{proof}

Note that the above theorem determines $\ex_r(n,\textup{Berge-}F)$ exactly for sufficiently large $n$ if $F$ has chromatic number more than 3 and a color-critical edge. This is already known, but no simple proof exists.
The $r$-uniform \textit{expansion} $F^{+r}$ of a graph $F$ is the specific $r$-uniform Berge copy that contains the most vertices, i.e. the $r-2$ vertices added to each edge of $F$ are distinct for different edges, and distinct from the vertices of $F$. Pikhurko \cite{pikhu} determined the Tur\'an number of $K_k^{+r}$ if $k>r$. According to the survey \cite{mubver} on expansions, Alon and Pikhurko observed that Pikhurko's proof generalizes to the case $F$ is a $k$-chromatic graph with a color-critical edge. This gives the exact value of $\ex_r(n,\textup{Berge-}F)$ for sufficiently large $n$, but the proof is complicated.

We remark that $\ex(n,H,F)$ is not necessarily equal to $\ex^{{\mathrm col}}(n,H,K_2;F)$ even if $H$ is weakly $F$-Tur\'an-stable.
Let $F_2$ denote two triangles sharing a vertex. It was shown in \cite{gerbner2} that $\ex^{{\mathrm col}}(n,C_4,K_2;F_2)=\cN^{{\mathrm col}}(C_4,K_2;G)$ where $G$ is obtained from blue $K_{\lfloor n/2\rfloor,\lceil n/2\rceil}$ by adding an arbitrary red edge. Observe that the red edge is not in any copy of $C_4$. This implies that $\lfloor n/2\rfloor\lceil n/2\rceil(\lfloor n/2\rfloor-1)(\lceil n/2\rceil-1)\le\rb(n,C_4,F_2)\le \lfloor n/2\rfloor\lceil n/2\rceil(\lfloor n/2\rfloor-1)(\lceil n/2\rceil-1)+1$. One can see that the proof in \cite{gerbner2} implies the following weak stability: if an $F_2$-free $n$-vertex graph $G'$ has $\cN^{{\mathrm col}}(C_4,K_2;G')=\ex^{{\mathrm col}}(n,C_4,K_2;F_2)$, then $G'=G$. This is enough for us to obtain the exact result $\rb(n,C_4,F_2)=\ex(n,C_4,F_2)=\cN(C_4,K_{\lfloor n/2\rfloor,\lceil n/2\rceil})=\lfloor n/2\rfloor\lceil n/2\rceil(\lfloor n/2\rfloor-1)(\lceil n/2\rceil-1)$.

\section{The symmetric case}

Note that the result proved in the previous sections only imply the simple bound $\rb(n,F,F)\le \ex(n,F)$ in the symmetric case (Lemma \ref{celeb}). In particular $\rb(n,F,F)=O(n^2)$, and for bipartite graphs $F$ we have $\rb(n,F,F)=o(n^2)$. We show that for non-bipartite graphs $\rb(n,F,F)=\Theta(n^2)$. First we show that $\rb(n,F,F)$ cannot significantly decrease by considering a larger graph.

\begin{proposition}\label{reszgrafszimm} If $F$ is a subgraph of $F'$, then
    $\rb(n,F',F')\le \rb(n-|V(F')|+|V(F)|,F,F)$.
\end{proposition}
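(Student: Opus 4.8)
The plan is to turn an extremal configuration for the left-hand side into an admissible configuration for the right-hand side by \emph{deleting one vertex at a time and restricting copies}. Fix an embedding of $F$ into $F'$ and let $v_1,\dots,v_k$ (with $k=|V(F')|-|V(F)|$) be the vertices of $F'$ outside the embedded copy. Setting $F^{(i)}$ to be the subgraph of $F'$ spanned by $V(F)\cup\{v_1,\dots,v_{k-i}\}$, consecutive graphs in $F'=F^{(0)}\supseteq F^{(1)}\supseteq\cdots\supseteq F^{(k)}=F'[V(F)]\supseteq F$ differ by exactly one vertex, and $F^{(k)}\supseteq F$ only in edges. Hence it suffices to prove the two atomic steps $\rb(n,B,B)\le\rb(n-1,B-v,B-v)$ for a vertex $v$ and $\rb(n,B,B)\le\rb(n,B-e,B-e)$ for an edge $e$, and then compose them; the general bound follows by chaining, since the host loses one vertex at each vertex-deletion step and none at the edge-deletion steps, for a total drop of $k$.

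For the vertex step, let $G$ be an $n$-vertex graph with a family $\cH'$ of copies of $B$, no rainbow $B$, and $|\cH'|=\rb(n,B,B)$. For $A\in\cH'$ let $w_A$ be the vertex playing the role of $v$, and put $f(A)=A-w_A$, a copy of $B-v$. I would first pass to an $(n-1)$-vertex host by deleting a single vertex $u^\ast$ and taking $\cH=\{f(A):A\in\cH'\}$; recording $\cH$ as a multiset makes the cardinality bookkeeping automatic, as $A\mapsto f(A)$ is then surjective onto $\cH$ and $|\cH|\ge|\cH'|$. (For the edge step the host is unchanged and one simply forgets the edge of $B$ corresponding to $e$.)

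The heart of the argument is that $\cH$ has no rainbow $B-v$. Suppose some copy $F_0$ of $B-v$ in the reduced host were rainbow, witnessed by distinct members $f(A_e)$ with $e\in f(A_e)\subseteq A_e$ for each edge $e$ of $F_0$. The edges of $F_0$ are thereby matched injectively to distinct $A_e\in\cH'$, so it would remain only to re-insert a vertex in the role of $v$, extending $F_0$ to a copy $F'_0$ of $B$ in $G$, and to assign to the edges of $F'_0$ incident to $v$ further members of $\cH'$ distinct from the $A_e$. Proposition~\ref{gree} is the natural tool here: if each such new edge lay in at least $|E(B)|$ members of $\cH'$, the matching could be completed greedily, producing a rainbow $B$ and contradicting the choice of $\cH'$; and an edge lying in few members is precisely the sparsity that should forbid the offending rainbow $B-v$ to begin with.

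The step I expect to be the main obstacle is reconciling the \emph{vertex reduction} with this \emph{lifting}: dropping from $n$ to $n-1$ vertices forces me to delete a host vertex, yet lifting a rainbow $F_0$ back to a rainbow $B$ requires a vertex available to play the role of $v$ with its incident edges present in $G$ at the location of $F_0$. Arranging that one deleted vertex $u^\ast$ simultaneously serves as the extra vertex for all relevant copies — while keeping $A\mapsto f(A)$ cardinality-preserving and the lift valid — is the delicate point, and it genuinely relies on structure rather than being formal. I would attack it either by a pigeonhole choice of $u^\ast$ that is inessential to the $(B-v)$-cores yet adjacent to them as the role of $v$ demands, or by exploiting extra structure of the extremal $G$; in the intended application $F'=K_{|V(F)|}$, for instance, every copy of $F'$ is a clique, the restriction map is automatically injective, and the missing incidences needed for the lift are supplied by completeness, which is exactly where the clean argument should go through.
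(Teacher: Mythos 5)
Your proposal goes in the wrong direction, and the direction it attempts is in fact false. First, the inequality sign in the statement is a typo in the paper: the paper's own proof, and the way the proposition is used immediately afterwards (to show $\rb(n,F,F)=\Theta(n^2)$ for non-bipartite $F$ by passing to an odd cycle contained in $F$), both establish $\rb(n,F',F')\ge \rb(n-|V(F')|+|V(F)|,F,F)$, i.e.\ that $\rb$ ``cannot significantly decrease by considering a larger graph.'' Accordingly, the paper's argument is a lower-bound construction for $\rb(n,F',F')$: take an extremal rainbow-$F$-free family $\cH_0$ of copies of $F$ on $n-|V(F')|+|V(F)|$ vertices, place a fixed copy of $F_0$ (the graph obtained from $F'$ by removing the embedded copy of $F$) on the remaining vertices, and let $\cH$ consist of all copies of $F'$ formed by the fixed $F_0$ together with an element of $\cH_0$; any rainbow $F'$ would force a rainbow $F$ inside $\cH_0$. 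Your plan instead takes an extremal family for the left-hand side and restricts its members, i.e.\ it tries to prove the literal upper bound on $\rb(n,F',F')$.

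That upper bound is false, so no repair of your argument can succeed. Take $F'=P_4$ and $F=M_2$ (the two end edges of the path), so $|V(F')|=|V(F)|=4$ and the claimed inequality reads $\rb(n,P_4,P_4)\le\rb(n,M_2,M_2)$; but the paper shows $\rb(n,P_4,P_4)=n-3$ while $\rb(n,M_2,M_2)=3$ for $n\ge 4$, a contradiction for $n\ge 7$. The same example refutes your atomic edge step $\rb(n,B,B)\le\rb(n,B-e,B-e)$ (delete the middle edge of $P_4$), and trivial examples such as $B=P_3$ refute it as well. The conceptual failure is exactly the obstacle you yourself flagged as the delicate point: after restricting copies of $B$ to copies of $B-v$ (or $B-e$), a rainbow $B-v$ can appear even though no rainbow $B$ existed, because a smaller graph needs fewer distinct members of the family, and the extra edges of $B$ need not be coverable at all --- Proposition \ref{gree} gives no help when those edges lie in few (or no) members of $\cH$. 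Finally, the ``intended application'' is not $F'=K_{|V(F)|}$ (that case concerns $H=K_r$ and Berge hypergraphs, not this proposition); here the proposition is applied with $F'$ an arbitrary non-bipartite graph and $F$ an odd cycle inside it.
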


\begin{proof}
    Let $F_0$ denote a graph obtained by removing a copy of $F$ from $F'$. We take a collection $\cH_0$ of $\rb(n-|V(F')|+|V(F)|,F,F)$ copies of $F$ on $n-|V(F)'|+|V(F)|$ vertices without a rainbow $F$, and we take a copy of $F_0$. We let $\cH$ consist of the copies of $F'$ that contain the copy of $F_0$ and an element of $\cH_0$. A rainbow copy of $F'$ may contain any set of vertices from the copy of $F_0$, but the remaining part must contain a rainbow $F$ on the other $n-|V(F)'|+|V(F)|$ vertices. Such an $F$ does not exist, thus $\cH$ is rainbow $F_0$-free, completing the proof.
\end{proof}

\begin{proposition}
    If $F$ is not bipartite, then $\rb(n,F,F)=\Theta(n^2)$.
\end{proposition}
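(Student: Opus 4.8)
The bound $\rb(n,F,F)=O(n^2)$ is free: taking $H=F$ in Lemma~\ref{celeb} gives $\rb(n,F,F)\le \ex(n,F,F)+\ex(n,F)=\ex(n,F)$, because an $F$-free graph has no copy of $F$, and $\ex(n,F)\le\binom{n}{2}$. So the whole point is the matching lower bound $\rb(n,F,F)=\Omega(n^2)$, which I would prove by exhibiting, for every non-bipartite $F$, a graph $G$ on $n$ vertices together with a rainbow-$F$-free family $\cH$ of $\Omega(n^2)$ copies of $F$. Write $f=|V(F)|$.

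The construction generalises the triangle example behind $\rb(n,K_3,K_3)=n^2/8$. Since $F$ is non-bipartite it contains a cycle, so it has a vertex $c$ of degree $d\ge 2$; let $a^1,\dots,a^d$ be its neighbours and set $F_0=F-c$. Take $s=\Theta(n)$ pairwise disjoint copies $F_0^{(1)},\dots,F_0^{(s)}$ of $F_0$ (the \emph{gadgets}), writing $a^l_i$ for the copy of $a^l$ in $F_0^{(i)}$, together with $t=\Theta(n)$ new vertices $c_1,\dots,c_t$ (the \emph{apices}); join each $c_j$ to $a^l_i$ for all $i\le s$, $l\le d$, $j\le t$, and add no other edges. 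For every pair $(i,j)$ the vertex set $V(F_0^{(i)})\cup\{c_j\}$ induces a copy $H_{i,j}$ of $F$ in which $c_j$ plays the role of $c$. Let $\cH=\{H_{i,j}\}$. As $n=s(f-1)+t$, choosing $s,t=\Theta(n)$ yields $|\cH|=st=\Theta(n^2)$.

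The family $\cH$ avoids a rainbow $F$ because of a bottleneck among the edges incident to apices. Each edge $c_ja^l_i$ lies in a \emph{unique} member, namely $H_{i,j}$: a member $H_{i',j'}$ contains $c_j$ only when $j'=j$ and contains $a^l_i$ only when $i'=i$. Hence whenever a copy $\tilde F\cong F$ in $G$ has an apex $c_j$ with two neighbours $a^l_i,a^{l'}_i$ in one and the same gadget $i$, the two edges $c_ja^l_i$ and $c_ja^{l'}_i$ both have the one-element list $\{H_{i,j}\}$; Hall's condition then fails for $\tilde F$, so its edges cannot be matched injectively to distinct members and $\tilde F$ is not rainbow. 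In particular each $H_{i,j}$ is itself safe, since there $c_j$ has $d\ge2$ neighbours $a^1_i,\dots,a^d_i$ inside gadget $i$.

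It remains to show that \emph{every} copy of $F$ in $G$ — not merely the members of $\cH$ — is caught by this mechanism, i.e.\ that $G$ has no \emph{spread} copy, one in which each apex has at most one neighbour per gadget. This is the step I expect to be the crux. The leverage is that the components of the gadgets are exactly the connected components of $G$ minus the apices, while an apex attaches to a gadget only at the $d$ prescribed sites $a^1,\dots,a^d$ of $F_0$. Thus in a spread copy $\tilde F$ the apex-preimages form an independent set of $F$ that separates the remaining vertices into blocks, each embedding as an induced subgraph of $F_0=F-c$ and meeting the apices only through these few attachment sites; comparing the sizes of the pieces with the number of available attachment sites is what should force a contradiction. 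I would first settle the case where $F_0$ is connected and $F$ is $2$-connected — this includes all odd cycles, where minimum degree $2$ together with the fact that the attachment sites sit at the two ends of the path $F_0$ makes the impossibility transparent — and then reduce the general case to it by the same separator bookkeeping, the only extra care being for vertices of degree $1$ and for a disconnected $F_0$.
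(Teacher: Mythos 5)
Your upper bound is fine, and your construction, specialized to $F=C_{2k+1}$, is exactly the paper's (path gadgets $P_{2k}$ plus apices joined to their two endpoints). The problem is the step you defer: the claim that $G$ has no spread copy of $F$ is not a technical verification awaiting "separator bookkeeping" --- it is false for your construction. Take $F=2K_3$, two disjoint triangles (non-bipartite, so it must be covered). Any admissible $c$ lies in one of the triangles, so $F_0=F-c=K_2\cup K_3$ and every gadget contains an intact triangle $T_i$. The union $T_1\cup T_2$ of the triangles of two different gadgets is a copy of $F$ containing no apex at all, hence spread, and it is rainbow: each edge of $T_i$ lies in the $t=\Theta(n)$ members $H_{i,j}$, so the six edges can be matched injectively to the six distinct members $H_{1,1},H_{1,2},H_{1,3},H_{2,1},H_{2,2},H_{2,3}$. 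Thus for this $F$ \emph{no} choice of $c$ rescues the construction, and the statement you plan to prove in the "general case" is simply untrue. Connected graphs cause trouble too, because you allow an arbitrary $c$ of degree at least $2$: for $F$ a triangle $xyz$ with a pendant path $zp_1p_2$, choosing $c=p_1$ gives gadgets (a triangle $x_iy_iz_i$ plus an isolated vertex $p_{2,i}$) in which the triangle survives, and then $x_iy_iz_i$ together with the path $z_ic_jp_{2,i'}$, $i'\ne i$, is a spread, hence rainbow, copy of $F$. So at minimum the choice of $c$ must be constrained, and for disconnected $F$ the construction cannot be repaired at all.

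The paper sidesteps all of this with a reduction you are missing (Proposition \ref{reszgrafszimm}): a rainbow-free family of copies of an odd cycle $C\subseteq F$ on slightly fewer vertices can be extended, by gluing a fixed copy of the rest of $F$ onto every member, into an equally large rainbow-$F$-free family of copies of $F$. This reduces the whole proposition to odd cycles, where your construction does work and the verification is short: the apices are cut vertices separating components of order $2k$, deleting the middle edges $u_1^jv_1^j$ leaves a bipartite graph, so every $C_{2k+1}$ in $G$ is an intended copy, and its two apex edges lie in a single member, so it is not rainbow. The essential point is that reducing to odd cycles kills exactly the phenomenon that breaks your construction: odd cycles surviving inside $F_0$. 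Without such a reduction (or another device with the same effect), the case analysis you propose cannot close the gap.
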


\begin{proof}
    By Proposition \ref{reszgrafszimm}, it is enough to prove the statement for odd cycles. Let $k\ge 1$. We take vertices $u_i^j,v_i^j$ for $i\le k$ and $j\le n/4k$, and vertices $w_\ell$ for $\ell\le n-2k\lfloor n/4k\rfloor$. We let $\cH$ consist of the copies of $C_{2k+1}$ of the form $w_\ell u_k^ju_{k-1}^j\dots u_1^jv_1^jv_2^j\dots v_k^jw_\ell$ for each $j$ and $\ell$. Let $G$ be the graph containing the edges in the elements of $\cH$.
    
    We claim that the resulting quadratic many elements of $\cH$ do not contain a rainbow $C_{2k+1}$. Observe that a rainbow $C_{2k+1}$ contains a $w_\ell$, since those vertices cut $G$ into components of order $2k$. Similarly, a rainbow $C_{2k+1}$ contains an edge $u_1^jv_1^j$ for some $j$, as deleting those edges we obtain a bipartite graph. Then the rainbow $C_{2k+1}$ has to contain $u_i^j$ and $v_i^j$ for every $i\le k$, and in particular the edges $w_\ell u_k$ and $w_\ell v_k$. But only one element of $\cH$ contains those two edges, a contradiction completing the proof.
\end{proof}

We conjecture that $\rb(n,F,F)$ and $\ex(n,F)$ have the same order of magnitude even for bipartite graphs if they are connected.

\begin{conjecture}
   For any connected graph $F$, we have $\rb(n,F,F)=\Theta(n,F,F)$.
\end{conjecture}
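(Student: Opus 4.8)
The conjecture asserts (correcting the evident typo) that $\rb(n,F,F)=\Theta(\ex(n,F))$ for every connected $F$. The upper bound is immediate: taking $H=F$ in Lemma \ref{celeb} and using $\ex(n,F,F)=0$ gives $\rb(n,F,F)\le \ex(n,F)$. Thus the entire content is the lower bound $\rb(n,F,F)=\Omega(\ex(n,F))$, and the plan is to produce, for each connected $F$, a rainbow-$F$-free family of $\Omega(\ex(n,F))$ copies of $F$. For non-bipartite $F$ this is already settled, since there both sides are $\Theta(n^2)$. So I would focus entirely on connected bipartite $F$, splitting into trees and graphs containing a cycle.

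For a tree $F$ we have $\ex(n,F)=\Theta(n)$, so it suffices to exhibit $\Omega(n)$ aligned copies. Here I would generalise the idea behind the $P_4$ result and the odd-cycle construction: fix a leaf $y$ of $F$ with neighbour $x$, write $F'=F-y$, take $\Theta(n)$ vertex-disjoint copies $F'_1,\dots,F'_m$ of $F'$, and attach a single new vertex $z$ joined to the image $x_i$ of $x$ in each $F'_i$. Each $F'_i\cup\{z x_i\}$ is a copy of $F$, giving $m=\Theta(n)$ members. Since the arms $F'_i$ are vertex-disjoint and meet only at $z$, each edge of the $i$-th copy lies only in the $i$-th member, so exactly as in the odd-cycle argument no injection into distinct members can realise a rainbow $F$ --- provided one first checks that every copy of $F$ in the host lies inside a single arm. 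That rigidity is the only thing to verify, and when it fails for the chosen leaf one can restore it by replacing the single vertex $z$ by a short attached gadget (a subdivided or padded star) that forces $z$ to play the role of $y$ and nothing else.

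The genuinely hard case, and the reason the statement is only a conjecture, is a bipartite $F$ that contains a cycle. Then $\ex(n,F)=\Theta(n^{1+\alpha})$ is superlinear (and, for most such $F$, not even known precisely), whereas the ``rigid arms glued at a gadget'' family above produces only $\Theta(n)$ copies and is hopelessly sparse. To reach $\Omega(\ex(n,F))$ one must instead build the family on top of a near-extremal $F$-free host $G^\ast$ with $\ex(n,F)$ edges: the aim would be to augment $G^\ast$ by an $O(1)$-sized gadget so that, for a fixed edge $e=xy$ of $F$, every edge of $G^\ast$ is completed (through the gadget) to a copy of $F$ in which $e$ is that edge, while the gadget simultaneously forces any copy of $F$ in the augmented graph to use a pair of edges occurring together in only one member --- the exact mechanism that kills the rainbow copy for odd cycles, where a cut-vertex layer together with the parity obstruction supplied the bottleneck.

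The main obstacle is precisely this last point. The naive ``share a common core'' family fails: if many copies of $F$ agree on a fixed sub-copy of $F-e$, those shared edges can be drawn from distinct members and a rainbow $F$ reappears, so one really needs global control of \emph{all} copies of $F$ in the dense host, not just those placed in $\cH$. For odd cycles this control came from parity, which is unavailable for bipartite $F$; finding a substitute bottleneck that coexists with an $F$-free host on $\ex(n,F)$ edges --- whose structure is itself often unknown --- is the crux, and I expect that resolving it even for $F=C_6$ or $F=K_{2,3}$ would already reveal the right general mechanism.
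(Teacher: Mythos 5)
This statement is a conjecture: the paper offers no proof of it, and your proposal, as you yourself acknowledge in the final paragraph, is not a proof either, so the honest headline is that both you and the paper leave it open. What can be compared are the partial results. Your framing is correct: with $H=F$, Lemma \ref{celeb} gives $\rb(n,F,F)\le\ex(n,F)$ (reading the statement's typo as $\Theta(\ex(n,F))$), the non-bipartite case follows from the paper's $\Theta(n^2)$ proposition, and all the substance lies in the lower bound for bipartite $F$.

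Two concrete criticisms of the rest. First, the tree case you labour over is in fact trivial, and your construction for it is both unnecessary and gapped. Taking $\lfloor n/|V(F)|\rfloor$ vertex-disjoint copies of $F$ as $\cH$ already works for every connected $F$ with at least two edges: by connectivity, every copy of $F$ in the union lies inside a single component, so all of its edges are contained in that one member only, and no injection of the edges into distinct members exists. Since $\ex(n,F)=\Theta(n)$ for trees, this settles all trees with no gadgets at all. Your hub construction, by contrast, genuinely fails for stars --- the copies of the form $x_izx_j$ through the hub use edges of two distinct members and are rainbow --- and the proposed ``padded gadget'' repair is unspecified; that is exactly the kind of gap that cannot be waved away, especially when it is being deployed on the easy case. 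Second, your diagnosis that for bipartite $F$ containing a cycle no substitute for the parity bottleneck is available is too pessimistic: the paper's proposition on bipartite graphs supplies one, and in particular both of your proposed test cases are already settled there. That proposition takes a near-extremal $F$-free graph, passes to a bipartite subgraph $G'$ with at least half its edges, blows up each vertex to one side of a bipartition of $F$, and places one copy of $F$ per edge of $G'$. A rainbow copy would collapse to a homomorphism of $F$ into the $F$-free graph $G'$, hence identify two vertices $u,v$ of $F$; if every such identification forces a common neighbour $w$, then the edges $uw,vw$ of the copy lie in a single member, killing rainbowness. This common-neighbour condition is precisely the ``substitute bottleneck'' you were seeking, and it yields $\rb(n,F,F)\ge\ex(\lfloor n/|V(F)|\rfloor,F)/2=\Omega(\ex(n,F))$. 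Both $K_{2,3}$ (identified vertices lie in the same part, which is completely joined to the other part) and $C_6$ (homomorphisms into the bipartite $G'$ preserve parity, so identified vertices are at distance two on the cycle) satisfy the condition. The genuinely open cases are bipartite graphs with cycles that fail it, such as $C_8$, where two antipodal vertices can be identified by a homomorphism onto two $4$-cycles sharing a vertex yet have no common neighbour.
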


Note that the above conjecture does not hold for some disconnected graphs. Let $M_2$ denote the matching of size 2. It is easy to see that $\ex(n,M_2)=n-1$ while $\rb(n,M_2,M_2)=3$ for $n\ge 4$.

We can prove the above conjecture for a large class of bipartite graphs.

\begin{proposition}
    Let $F$ be a bipartite graph such that for any homomorphism of $F$, if $u$ and $v$ are mapped to the same vertex, then they have a common neighbor. Then $\rb(n,F,F)\ge \ex(\lfloor n/|V(F)|\rfloor,F)/2$. 
\end{proposition}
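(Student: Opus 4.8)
The plan is to exhibit, for a suitable extremal graph, a \emph{blow-up} carrying many copies of $F$ but no rainbow $F$. Write $f=|V(F)|$, fix a bipartition $V(F)=X\sqcup Y$, set $m=\lfloor n/f\rfloor$, and let $G_0$ be an $F$-free graph on $m$ vertices with $e(G_0)=\ex(m,F)$. To orient edges consistently I would first replace $G_0$ by a maximum bipartite subgraph $G_1\subseteq G_0$ with parts $(P,Q)$ and $e(G_1)\ge e(G_0)/2=\ex(m,F)/2$. Then I blow up: each vertex $a$ of $G_0$ becomes a class $V_a=\{a^1,\dots,a^f\}$ of $f$ fresh vertices (using $mf\le n$ vertices in total, padding with isolated vertices if needed), and for every edge $ab\in E(G_1)$ with $a\in P$, $b\in Q$ I place a single copy $F_{ab}$ of $F$ by sending $k\in X$ to $a^k$ and $k\in Y$ to $b^k$; concretely $E(F_{ab})=\{a^kb^l: kl\in E(F),\ k\in X,\ l\in Y\}$. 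Taking $G$ to be the union of these copies and $\cH=\{F_{ab}:ab\in E(G_1)\}$ gives $|\cH|=e(G_1)\ge \ex(m,F)/2$, so the counting half of the claim is immediate.

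The substance is to show $\cH$ has no rainbow $F$. The first step I would record is that each edge of $G$ lies in exactly one member: an edge between classes $V_a$ and $V_b$ can only have been contributed by the copy on the edge $ab$ of $G_1$, since classes are disjoint. Hence the members are pairwise edge-disjoint copies of $F$, and a rainbow copy of $F$ is exactly an embedding $\psi\colon F\hookrightarrow G$ no two of whose edges share a member. Given such a $\psi$, I consider the projection $\pi\colon V(G)\to V(G_0)$ defined by $a^k\mapsto a$; since every edge of $G$ projects to an edge of $G_1\subseteq G_0$, the composite $\phi:=\pi\circ\psi$ is a graph homomorphism $F\to G_0$. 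As $G_0$ is $F$-free it admits no injective homomorphism from $F$, so $\phi$ must identify some pair $u\neq v$, that is $\phi(u)=\phi(v)$.

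This is where the hypothesis on $F$ enters: applied to the homomorphism $\phi$, it yields a common neighbor $w$ of $u$ and $v$. Then $\phi(w)\neq\phi(u)$ (the edge $uw$ forbids a loop), and the two distinct edges $\psi(u)\psi(w)$ and $\psi(v)\psi(w)$ of $G$ (distinct because $\psi$ is injective) both run between the class $V_{\phi(u)}=V_{\phi(v)}$ and the class $V_{\phi(w)}$. By the previous paragraph they therefore lie in the same member $F_{\phi(u)\phi(w)}$, so $\psi$ is not rainbow. Thus $\cH$ is rainbow-$F$-free and the bound follows.

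The hard part is exactly this no-rainbow verification: one must (i) be careful that the members are genuine, pairwise edge-disjoint copies of $F$, so that ``rainbow'' reduces to ``no two edges in one member,'' and (ii) run the projection argument, where the $F$-freeness of $G_0$ forces non-injectivity of $\phi$ and the homomorphism hypothesis converts an identified pair into two edges trapped in a common member. I would also remark that taking an arbitrary orientation of $G_0$, rather than passing to the bipartite $G_1$, makes the identical argument go through using all $\ex(m,F)$ edges, so the factor $1/2$ is merely the (harmless) cost of the convenient global bipartition and could in principle be removed.
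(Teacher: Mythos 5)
Your proof is correct and follows essentially the same route as the paper's: blow up a maximum bipartite subgraph of an extremal $F$-free graph into edge-disjoint copies of $F$, then use the projection homomorphism, the $F$-freeness of the base graph (forcing non-injectivity), and the common-neighbor hypothesis to trap two edges of any purported rainbow copy inside a single member. Your closing remark is also sound: because each base edge gives the unique member between its two vertex classes regardless of any global bipartition, an arbitrary orientation works just as well, so the factor $1/2$ retained in the paper's bound could indeed be removed.
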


\begin{proof}
    Consider an $F$-free graph $G$ on $\lfloor n/|V(F)|\rfloor$ vertices, with $\ex(\lfloor n/|V(F)|\rfloor,F)$ edges. It is well-known and easy to see that there is a bipartite subgraph $G'$ of $G$ with at least $\ex(\lfloor n/|V(F)|\rfloor,F)/2$ edges. Indeed, we consider a bipartite subgraph with the most edges, then every vertex $v$ has at least as many neighbors in the other part as in its part, thus at least half the edges incident to $v$ are in $G'$. This implies that $|E(G')|\ge |E(G)|/2$.
    Let $A$ and $B$ denote the two parts of $G'$.

    Consider a bipartition of $F$ into two independent sets and let $s$ and $t$ be the order of those two sets. We replace each vertex $u$ of $A$ by $s$ vertices $u_1,\dots,u_s$, and replace each vertex $v$ of $B$ by $t$ vertices $v_1,\dots,v_t$. For each edge $uv$ of $G'$, we place a copy of $F$ on $u_1,\dots,u_s,v_1,\dots,v_s$ such that each edge is of the form $u_iv_j$. This way we created at least $\ex(\lfloor n/|V(F)|\rfloor,F)/2$ copies of $F$ on at most $n$ vertices.

    We claim that there is no rainbow copy of $F$ here. Assume otherwise, and observe that by mapping each vertex $u_i$ of this copy to the vertex $u$ of $G'$ (where $u_i$ is one of the vertices replacing $u$), we obtain a homomorphism of $F$ to $G'$. Since $G'$ is $F$-free, at least two vertices $u$ and $v$ of $F$ are mapped into the same vertex of $G'$. Then $u$ and $v$ have a common neighbor $w$. Observe that the edges $uw$ and $vw$ appear only in the same copy of $F$, a contradiction completing the proof.
\end{proof}

Let $P_k$ denote the path on $k$ vertices.

\begin{thm}
\begin{displaymath}
\rb(n,P_4, P_4)=
\left\{ \begin{array}{l l}
n-3 & \textrm{if\/ $n\ge 5$},\\
3 & \textrm{if\/ $n=4$},\\
0 & \textrm{if\/ $n\le 3$}.\\
\end{array}
\right.
\end{displaymath}
\end{thm}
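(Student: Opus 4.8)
The plan is to dispose of the small cases by inspection and then to prove the two matching estimates $\rb(n,P_4,P_4)\ge n-3$ and $\rb(n,P_4,P_4)\le n-3$ for $n\ge 5$; the cases $n\le 4$ are finite and can be checked by hand, since for $n\le 3$ there is no copy of $P_4$ and for $n=4$ every member of $\cH$ is a Hamiltonian path. Throughout I will use the Hall-type reformulation of being rainbow that is implicit in Proposition \ref{gree}: a path $p_1p_2p_3p_4$ of $G$ is rainbow exactly when the bipartite incidence graph between its three edges and the members of $\cH$ containing them admits a perfect matching saturating the edges. Consequently a $P_4$ fails to be rainbow as soon as two of its edges lie in a common member and in no other member, since then those two edges can only be represented by that single member.

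For the lower bound I would exhibit the explicit family on the vertex set $\{a,u,b\}\cup\{v_1,\dots,v_{n-3}\}$ consisting of the $n-3$ paths $C_i\colon a\,u\,v_i\,b$. Then $G=\bigcup\cH$ is the pendant edge $au$ together with all edges $uv_i$ and $v_ib$, so that $u$ is adjacent to $a$ and to every $v_i$, while $b$ is adjacent to every $v_i$. Because $a$ has degree one and $u,b$ are non-adjacent, every $P_4$ of $G$ has one of the three shapes $a\,u\,v_i\,b$, $\;v_j\,u\,v_i\,b$, or $u\,v_i\,b\,v_j$, and in each of them both of the edges $uv_i$ and $v_ib$ appear. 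Since these two edges belong only to $C_i$, they can be represented by no member other than $C_i$, so no rainbow $P_4$ exists, and $|\cH|=n-3$.

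For the upper bound I would argue by contradiction, assuming $|\cH|\ge n-2$, and use the matching argument behind Lemma \ref{celeb2}. A largest matching $M$ between $\cH$ and $E(G)$ gives an injection $C\mapsto e_C\in E(C)$ whose image $G'=\{e_C:C\in\cH\}$ is $P_4$-free, for any $P_4$ contained in $G'$ would have its three edges matched to three distinct members and hence be rainbow; moreover the set $A_2$ of Lemma \ref{cel2} is empty (a member lying entirely inside the $P_4$-free graph $G'$ would itself be a forbidden $P_4$), so every member is matched and $|E(G')|=|\cH|$. A $P_4$-free graph is a disjoint union of stars and triangles with $|E(G')|=n-k+t$, where $k$ is its number of components and $t$ the number of triangle components; hence $|\cH|\ge n-2$ forces $k-t\le 2$, meaning $G'$ is a union of some triangles together with at most two stars. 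It then remains to show that such a matched graph always carries a rainbow $P_4$: a triangle $xyz$ of $G'$ (its edges matched to distinct members $C_{xy},C_{yz},C_{zx}$) can be opened into a path $p\,x\,z\,s$ by appending the second edges at $x$ and at $z$ coming from $C_{xy}$ and $C_{yz}$, giving a rainbow using $C_{xy},C_{zx},C_{yz}$; and a near-spanning star of $G'$ supplies, through the second edges of its incident members, either a rainbow $P_4$ or an augmenting path for $M$.

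The hard part will be this last step, namely turning the bookkeeping of the matched graph $G'$ into genuine rainbow paths with all the required vertices distinct. One must run a case analysis on where the two non-matched edges of each incident member sit — in particular whether the triangle or star vertices occur as endpoints or as interior vertices of their members — and check that in every configuration either Hall's condition is satisfied on some length-three path or the matching $M$ can be enlarged, contradicting its maximality. Obtaining the exact constant, i.e. excluding $k-t\in\{0,1,2\}$ rather than merely bounding $|\cH|$ by $\ex(n,P_4)=n$, is precisely where the maximality of $M$ and the positions of the free endpoints must be exploited; the lower-bound family, whose matched graph is a single star centred at $b$ together with the two isolated vertices $a$ and $u$, realises $k-t=3$ and shows that no further slack is available.
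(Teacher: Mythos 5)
Your lower bound is complete and is essentially the paper's own construction (your paths $a\,u\,v_i\,b$ are the paper's paths $a\,b\,c_i\,d$ with the vertices renamed), so that half stands. The upper bound, however, has a genuine gap. Your reduction via Lemma \ref{cel2} is sound as far as it goes: the matched graph $G'$ is $P_4$-free, $A_2=\emptyset$, hence every member is matched, $|\cH|=|E(G')|=n-k+t$, and $|\cH|\ge n-2$ forces $k-t\le 2$. But this only recovers (a refinement of) the easy bound $|\cH|\le\ex(n,P_4)$; the entire difficulty of the theorem is concentrated in the step you defer, namely showing that a spanning $P_4$-free matched graph with triangle components or with at most two star components always yields a rainbow $P_4$ or an augmenting path. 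You explicitly label this ``the hard part'' and give only a plan, so the proof is incomplete exactly where the exact constant $n-3$ is decided. Moreover, the one concrete device you do offer, the triangle opening, fails as stated: the member $C_{xy}$ matched to $xy$ need not have a ``second edge at $x$'' at all (the vertex $x$ may be an endpoint of the path $C_{xy}$), and even when such edges exist they may land on $z$ or on each other, so the path $p\,x\,z\,s$ need not exist. The case is rescuable (if some member attached to a triangle edge has an edge leaving the triangle at an endpoint of that edge, one gets a rainbow directly; otherwise all three members' spare edges leave from the opposite triangle vertex and one can permute the colors), but this analysis, and the considerably longer one for one or two near-spanning stars, is what a proof must actually contain. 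The root of the trouble is that a matched edge is a much weaker object than the paper's \emph{light} edge: an edge of your $G'$ can lie in many members, which is precisely why your case analysis balloons.

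For comparison, the paper avoids the matching machinery in this symmetric case. It first proves that every member contains a light edge (an edge lying in exactly one member), using the observation that two distinct members cannot each contain a different edge of a third member; choosing one light edge per member gives a graph $G'$ with $|\cH|$ edges that is both $P_4$-free and triangle-free (lightness is what makes the triangle argument go through), i.e.\ a star forest. A separate claim then shows that if $|E(G')|\ge n-2$ and $n\ge 5$, some vertex lies in at most one member, and induction on $n$ (with base case $n=5$ done by hand) finishes the proof. If you wish to salvage your route, you must supply the full case analysis for $k-t\in\{0,1,2\}$; alternatively, replacing ``matched edge'' by ``light edge'' puts you back on the paper's track, where the triangle-freeness and the inductive step come almost for free.
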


\begin{proof}
    The lower bound is given by the following construction. We take vertices $a,b,c_1,\dots,c_{n-3},d$ and paths $abc_id$. Assume that this graph contains a rainbow $P_4$. A vertex $c_i$ or $a$ can only be at one of the ends of this path as the edges incident to $c_i$ are of the same color and only one edge is incident to $a$. Therefore, $b$ and $d$ are the middle vertices, but they are not adjacent, a contradiction. In the case $n=4$, obviously any two copies of $P_4$ avoid a rainbow $P_4$.

To prove the upper bound, consider a family $\cH$ of copies of $P_4$ without a rainbow $P_4$, and let $G$ be the graph consisting of the edges appearing in elements of $\cH$. We say that an edge of $G$ is \textit{light} if it is contained in exactly one copy of $P_4$. Observe that for three different elements $H,H',H''\in\cH$, we cannot have that $H$ contains an edge of $H''$ and $H'$ contains another edge of $H''$. Indeed, those two edges have a different color and the third edge gets the color of $H''$, showing that $H''$ is rainbow.
This implies that there is at least one light edge in every element of $\cH$, as a different copy of $P_4$ cannot contain the same three edges.

Let us pick a light edge from each element of $\cH$ and let $G'$ be the resulting graph. Obviously, $G'$ is $P_4$-free. We claim that $G'$ is also triangle-free. Indeed, if $uvw$ is a triangle in $G'$, then the $P_4$ corresponding to $uv$ has a third vertex $w'$ adjacent to, say, $u$. This means that $uw'$ is an edge of $G$. Observe that $w\neq w'$ since $uw$ is a light edge. Then the edges $vw$, $wu$, $uw'$ form a rainbow $P_4$ in $G$, a contradiction. Therefore, each component of $G'$ is a star. 

\begin{clm} If $|E(G')|\ge n-2$ and $n\ge 5$, then
there is a vertex contained in at most one element of $\cH$.
\end{clm}

\begin{proof}

Let us consider a star component of $G'$ with center $u$ and leaves $v_1,\dots,v_k$ with $k\ge 4$. Assume that $uv_1$ is contained in $H\in \cH$ and there is $H'\in\cH$ with $H'\neq H$ that contains $v_1$. Without loss of generality, $H'$ does not contain $v_2$. Then we can go from $v_1$ to a vertex $w\neq u$ using $H'$. We can go from $v_1$ to $u$ using $H$, and from $v_2$ to $u$ using the single element of $\cH$ containing $uv_2$. By the same argument, for $k\ge 2$, we obtain a contradiction if $H'$ has a vertex not in the star. 

We obtained that either we have a single star component with $k=3$, thus $n=4$, or each component has at most two vertices, in which case there are at least three components, thus at most $n-3$ edges in $G'$, a contradiction.
\end{proof}

Let us return to the proof of the theorem. We apply induction on $n$. Let us start with the base cases $n\le 5$. The cases $n\le 3$ are trivial. If $n=4$, we cannot have three edges in $G'$ without a triangle or $P_4$. In the case $n=5$, $G'$ has to consist of a two-edge path $uvw$ and an isolated edge $u'v'$. Let $H$ be the copy of $P_4$ containing the edge $u'v'$. If an edge of $H$ from $u'$ or $v'$ goes to $u$ or $w$, then it forms a rainbow $P_4$ with $uv$ and $vw$. Therefore, the edge of $H$ adjacent to $u'v'$ goes to $v$. Without loss of generality, $v'v$ is an edge of $H$. Then the third edge of $H$ cannot be incident to $v$, as $uv$ and $vw$ are light edges, $vv'$ is already in $H$ and $vu'$ would create a triangle. Thus, the third edge of $H$ goes from $u'$ to $u$ or $w$, a possibility we have already ruled out. This contradiction proves the case $n=5$.

Now consider an arbitrary $n\ge 6$. Let $v$ be a vertex contained in at most one element of $\cH$ and delete that vertex. Then we obtain a family $|\cH'|$ of copies of $P_4$ on $n-1$ elements such that $|\cH|\le |\cH'|+1$. By induction, 
$|\cH'|\le n-4$, completing the proof.
\end{proof}

%$B_2$ vs $B_2$ $n^2/12+o$ mert olyan ahol 4 ponton több van, az kevés mert $B_t$-freeben háromszögek, olyan amiben mindkét háromszög 2-heavy, az kevés van ugyanezért, és abban kevésszer vannak fedve a háromszögek. Amiben csak egy háromszög 2-heavy, és az 3-heavy, abban van három light él, ahol meg 2 és 1, azokra 3-éleket számolunk, kéne a Berge multihipergráfokra is...

Recall that $B_t$ denotes the \textit{book graph}, which consists of $t$ triangles sharing an edge $uv$. We call $u$ and $v$ the \textit{rootlet vertices} and the other $t$ vertices are the \textit{page vertices}. Alon and Shikhelman \cite{as} showed that $\ex(n,K_3,B_t)=o(n^2)$. The author \cite{gerbner} showed that $\ex(n,K_r,B_t)=o(n^2)$. Here we give another simple bound.

\begin{proposition}For any $r$ and $t$, we have $\ex(n,B_t,B_r)=o(n^2)$.\end{proposition}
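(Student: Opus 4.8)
The plan is to count copies of $B_t$ directly and reduce to the known bound on triangles. A copy of $B_t$ in $G$ is precisely an edge $uv$ together with a choice of $t$ page vertices, each forming a triangle with $uv$; equivalently it is an edge $uv$ together with $t$ of its common neighbors. Writing $c(uv)$ for the number of common neighbors of the endpoints of an edge $uv$, this gives the exact formula $\cN(B_t,G)=\sum_{uv\in E(G)}\binom{c(uv)}{t}$.

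First I would exploit the $B_r$-freeness. Since $G$ contains no $B_r$, no edge has $r$ common neighbors, so $c(uv)\le r-1$ for every edge $uv$. Consequently each summand $\binom{c(uv)}{t}$ is bounded by the constant $\binom{r-1}{t}$, and it is nonzero only when $c(uv)\ge t\ge 1$. Hence we may bound $\binom{c(uv)}{t}\le \binom{r-1}{t}\,c(uv)$ for every edge: this holds trivially when the left-hand side is $0$, and otherwise uses $c(uv)\ge 1$.

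Summing this inequality over all edges turns the book count into a constant multiple of the triangle count, because $\sum_{uv\in E(G)}c(uv)=3\,\cN(K_3,G)$, each triangle being counted once for each of its three edges. Therefore $\cN(B_t,G)\le 3\binom{r-1}{t}\,\cN(K_3,G)$. Finally I would invoke the result of Alon and Shikhelman that $\ex(n,K_3,B_r)=o(n^2)$, i.e.\ every $B_r$-free $n$-vertex graph has $o(n^2)$ triangles, to conclude $\cN(B_t,G)=o(n^2)$, as required.

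The argument has no serious obstacle; the one point to get right is the passage from the naive bound $\binom{c(uv)}{t}\le\binom{r-1}{t}$, which only yields $O(n^2)$, to the sharper linear-in-$c(uv)$ estimate that exposes the triangle count and supplies the extra $o(1)$ factor. (Note also that the count is identically $0$ once $t\ge r$, since then no edge can have $t$ common neighbors, so only the range $t<r$ is genuinely relevant.)
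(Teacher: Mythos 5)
Your proof is correct and is essentially the same as the paper's: both arguments charge each copy of $B_t$ to a triangle it contains, use $B_r$-freeness to bound the codegree $c(uv)\le r-1$ so that each triangle extends to only $O(1)$ copies of $B_t$, and then invoke Alon--Shikhelman's bound $\ex(n,K_3,B_r)=o(n^2)$. (The only cosmetic caveat: your formula $\cN(B_t,G)=\sum_{uv}\binom{c(uv)}{t}$ is an exact count only for $t\ge 2$ --- for $t=1$ each triangle is counted three times --- but since it overcounts, the upper bound you need still holds.)
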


\begin{proof} Let $G$ be a $B_r$-free graph on $n$ vertices. We pick copies of $B_t$ the following way. We pick a triangle $uvw$, $o(n^2)$ ways, than we pick $t-1$ other common neighbors of $u$ and $v$. As $u$ and $v$ have less than $r$ common neighbors, there are $O(1)$ ways to pick the additional vertices. Clearly every copy of $B_t$ is counted at least once, completing the proof.
\end{proof}

The author determined $\ex_3(n,\textup{Berge-}B_r)=\rb(n,K_3,B_r)$ exactly for $n$ sufficiently large. Its value is $\lfloor n^2/8\rfloor$ if $r\le 2$ and $\lfloor n^2/8\rfloor+(r-1)^2$ for $r\ge 3$.
Here we determine the asymptotics of $\rb(n,B_t,B_r)$ for every $r\ge t$.

\begin{thm} %$\rb(n,B_t,B_t)=(1+o(1))n^2/8$.
   If $r\ge t\ge 2$, then we have $\rb(n,B_t,B_r)=(1+o(1))n^2/8$.
\end{thm}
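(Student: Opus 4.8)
For the lower bound, the natural construction is to start from the extremal configuration for $\rb(n,K_3,B_r)$, whose value is $\lfloor n^2/8\rfloor + O(1)$. That construction places $\Theta(n^2)$ triangles without a rainbow $B_r$; the standard such example is a complete bipartite graph $K_{n/2,n/2}$ with one added edge, where the added edge together with each vertex on the other side forms the triangles. I would reinterpret each triangle as a copy of $B_t$: since $B_t$ contains a triangle, and the goal is to count copies of $B_t$ (not $K_3$), I must check that a family of $(1+o(1))n^2/8$ copies of $B_t$ can be placed with no rainbow $B_r$. The key observation is that in a book-type construction with a single distinguished edge $uv$ acting as the shared spine, each copy of $B_t$ shares the rootlet edge $uv$, and the page vertices are drawn from a large common-neighborhood set; forbidding a rainbow $B_r$ reduces to an injective-choice (Hall-type) obstruction on the pages, which fails exactly when the number of page-disjoint books is too small. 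I would verify that this yields $(1-o(1))n^2/8$ copies.

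For the upper bound, I would invoke Lemma~\ref{celeb2}, which gives $\rb(n,B_t,B_r)\le \ex^{{\mathrm col}}(n,B_t,K_2;B_r)$. The task then becomes bounding the number of red copies of $B_t$ plus blue edges in a $B_r$-free red-blue graph $G$. Here the Proposition just proved, $\ex(n,B_t,B_r)=o(n^2)$, is decisive: the number of \emph{red} copies of $B_t$ in any $B_r$-free graph is $o(n^2)$, so the red contribution is negligible and the whole quantity is governed by the number of blue edges $|E(G_{blue})|$. Since $G$ is $B_r$-free, it is in particular $K_3$-free after discarding enough structure — but more precisely I need that a $B_r$-free graph has at most $(1+o(1))n^2/8$ edges that can simultaneously be blue while keeping the red $B_t$-count small. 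The clean route is: $G_{blue}$ itself is $B_r$-free (being a subgraph of $G$), and a $B_r$-free graph has at most $\lfloor n^2/4\rfloor + O(1)$ edges, which is too weak. So instead I would argue that blue edges lying in many triangles force red books, and control the count via the $o(n^2)$ bound on red $B_t$'s combined with a triangle-counting argument showing the blue graph is essentially triangle-sparse, hence has at most $(1+o(1))n^2/8$ edges by the Kruskal–Katona / triangle-free edge bound (Turán).

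\textbf{The main obstacle} I anticipate is the upper bound's blue-edge count: showing $|E(G_{blue})| \le (1+o(1))n^2/8$ rather than the trivial $n^2/4$. The mechanism must be that blue edges inside many triangles generate rainbow books unless the triangles are "absorbed" into red $B_t$ copies, of which there are only $o(n^2)$; a blue edge in $\Omega(n)$ triangles should therefore be rare. I expect the argument to go through a supersaturation statement for books: if $G_{blue}$ had $(1/8+\varepsilon)n^2$ edges it would contain $\Omega(n)$ copies of $B_r$ (by Erdős–Simonovits-type supersaturation for the book, whose Turán density equals that of $K_3$), and each such blue book, combined with the ambient coloring, either is itself monochromatic blue (a contradiction, as a blue $B_r$ is already a rainbow $B_r$ trivially once its edges come from distinct copies) or forces the red $B_t$ count upward. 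Pinning down this last dichotomy rigorously — translating the monochromatic-or-red-rich alternative into the $o(n^2)$ red bound — is the delicate step, and it is where I would spend the most care.
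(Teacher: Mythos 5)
Your upper bound strategy cannot work, and the obstruction is structural rather than technical. Lemma \ref{celeb2} bounds $\rb(n,B_t,B_r)$ by $\ex^{{\mathrm col}}(n,B_t,K_2;B_r)$, but this colored quantity equals $(1+o(1))n^2/4$, not $(1+o(1))n^2/8$: take the $B_r$-free graph $K_{\lfloor n/2\rfloor,\lceil n/2\rceil}$ with every edge blue; it has $\lfloor n^2/4\rfloor$ blue edges and no red copy of $B_t$ (indeed no triangle at all), so no dichotomy of the form ``blue edges in many triangles force red books'' can ever bite. Once you pass to the colored relaxation you have discarded exactly the information that makes the rainbow problem have answer $n^2/8$, namely that each blue edge must be matched to a distinct copy of $B_t$ whose edges are actually present in the same graph. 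Your supersaturation step is also backwards: a blue graph with $(1/8+\varepsilon)n^2$ edges is well below $\ex(n,B_r)=\lfloor n^2/4\rfloor$, so it need not contain even one copy of $B_r$. The paper avoids this trap entirely (and indeed warns at the start of Section 4 that the colored machinery only yields $\rb(n,F,F)\le\ex(n,F)$). Instead it works directly with the family $\cH$: every copy of $B_t$ contains a $(2r+1)$-light triangle; the copies whose light triangle is 2-heavy (shared with another copy) form a subfamily of size $o(n^2)$, because the triangles picked from them span a $B_{2tr}$-free auxiliary graph (a greedy rainbow-assembly argument) and $\ex(n,K_3,B_{2tr})=o(n^2)$ by Alon--Shikhelman; each remaining copy owns a triangle contained in no other copy, and these triangles form a family of $K_3$'s with no rainbow $B_r$, hence number at most $\rb(n,K_3,B_r)=n^2/8+O(1)$ by the known Berge-book result. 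This reduction to the $H=K_3$ case is the idea your proposal is missing.

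The lower bound is also broken. The configuration you call standard, $K_{\lfloor n/2\rfloor,\lceil n/2\rceil}$ plus one edge $uv$, contains only $\Theta(n)$ triangles (those through $uv$), nowhere near $n^2/8$; the actual extremal family for $\rb(n,K_3,K_3)$ (Gy\H ori, Goorevitch--Holzman) takes $n/4$ disjoint edges $u_iv_i$ and $n/2$ further vertices $x_j$, with the $n^2/8$ triangles $u_iv_ix_j$. Worse, your single-spine family --- all copies of $B_t$ sharing one rootlet edge $uv$ with pages drawn from a large common set of size $m$ --- \emph{does} contain a rainbow $B_r$: every edge of a candidate $B_r$ with rootlet $uv$ lies in at least $\binom{m-1}{t-1}\ge 2r+1=|E(B_r)|$ members of the family, so Proposition \ref{gree} produces a rainbow copy greedily; and if instead you force the books to be page-disjoint, only $O(n)$ of them fit. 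The paper's construction uses quadratically many \emph{disjoint} rootlet edges $u_iv_i$, a fixed set of $t-1$ page vertices $w_1,\dots,w_{t-1}$ shared by all copies, and one private page $x_j$ per copy; rainbow copies of $B_r$ are excluded because deleting the $w_\ell$ leaves the Gy\H ori configuration, which is rainbow-triangle-free, and the $w_\ell$ have too few common neighbors with any potential partner to serve as rootlet vertices.
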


\begin{proof}
    To prove the lower bound, let us take vertices $u_iv_i$ for $i\le n/4$, $w_1,\dots,w_{t-1}$ and $x_j$ for $j\le n-2\lfloor n/4\rfloor-t+1$. For every $i$ and $j$, we take the copies of $B_t$ where the rootlet vertices are $u_i$ and $v_i$ and the page vertices are $w_1,\dots,w_{t-1}$ and $x_j$.

    The subgraph obtained by deleting $w_1,\dots,w_{t-1}$ is rainbow triangle-free (this is the construction from \cite{gyori} and \cite{gh}). Therefore, in any rainbow $B_r$, one of the rootlet vertices is $w_\ell$. As the only neighbors of $w_\ell$ are $u_i$ and $v_i$, the other rootlet vertex is, say, $u_1$. The only common neighbor of $w_\ell$ and $u_1$ is $v_1$, a contradiction.

    Let us continue with the upper bound. Let $\cH$ be a family of copies of $B_t$ on $n$ vertices. We say that a triangle or edge is \textit{$p$-heavy} if it is contained in at least $p$ elements of $\cH$ and \textit{$p$-light} otherwise.
    %Let $\cH_1$ denote the subfamily of copies of $B_2$ that contain only $(2t+1)$-heavy triangles. Then each edge in the elements of $\cH_1$ is $(2t+1)$-heavy. Let $G_1$ denote the graph having the edges in the elements of $\cH_1$. By Proposition \ref{} $G_1$ is $B_t$-free, thus $G_1$ contains $o(n^2)$ copies of
    Observe that by Proposition \ref{gree}, every element of $\cH$ contains a $(2r+1)$-light edge, thus a $(2r+1)$-light triangle.

    Let $\cH_1$ denote the subfamily of $\cH$ consisting of the copies of $B_t$ that contain a 2-heavy $(2r+1)$-light triangle.

    \begin{clm}
        $|\cH_1|=o(n^2)$.
    \end{clm}
    \begin{proof}[Proof of Claim]
Let us pick a 2-heavy $(2r+1)$-light triangle from each element of $\cH_1$ and let $G_1$ denote the graph consisting of the edges of those triangles. Assume that $G_1$ contains a $B_{2tr}$ with rootlet vertices $u,v$ and page vertices $w_1,\dots,w_{2tr}$. As each edge is 2-heavy, we can take two elements of $\cH_1$ to form a rainbow path with edges $uw_i$ and $w_iv$. Those two elements contain at most $2t+1$ other vertices.

We pick an element of $\cH_1$ that contains the edge $uv$. It avoids without loss of generality $w_1,\dots,w_{2tr-t}$. Then we pick two elements of $\cH_1$ to form a rainbow path $uw_1v_1$, they avoid without loss of generality $w_2,\dots,w_{2tr-3t-1}$. We continue this way, the elements of $\cH_1$ we pick for the path $uw_iv$ avoid $w_{i+1},\dots,w_{2tr-i(2t+1)-t}$, hence we can pick elements for the path $uw_tv$, obtaining a rainbow $B_r$, a contradiction. 

We obtained that $G_1$ is $B_{2tr}$-free, thus contain $o(n^2)$ triangles. For each element of $\cH_1$, we picked a triangle in $G_1$, and each triangle was picked at most $2r+1$ times, completing the proof.
    \end{proof}

    Let us return to the proof of the theorem. It is left to deal with the elements of $\cH$ that contain a 2-light triangle. We pick such a triangle for each of them. Those triangles clearly cannot contain a rainbow $B_r$, thus there are at most $\rb(n,K_3,B_r)=n^2/8+O(1)$ of them, completing the proof.
\end{proof}

    \section{Concluding remarks}

    We have proved generalizations of some fundamental results concerning the Tur\'an number of Berge hypergraphs. Probably there are several other results that can be generalized the same way.
 We also mention some variants of Berge hypergraphs that can be studied in our setting. 
 
 The $t$-wise Berge hypergraphs \cite{gnpv} are those where we take $t$ hyperedges for every edge. In our setting we look for graphs $F$ such that we can pick $t$ distinct copies of $H$ for every edge of $F$ (altogether $t|E(F)|$ elements of $\cH$).

    A natural idea is to study the same problem for hypergraphs instead of graphs. This was already started in the Berge setting, see \cite{bgkkp}.

    Another natural idea is to consider families $\cH$ that consists of different graphs. This was also studied in the Berge setting, as taking cliques of different order corresponds to non-uniform hypergraphs.

We have already mentioned in Section 2 the expansion, which is a specific Berge copy of a graph. In our setting, this corresponds to finding a rainbow copy of $F$ where vertices in the elements of $\cH$ corresponding to the edges of $F$ are distinct outside the necessary intersections in $F$.

\bigskip

\textbf{Funding}: Research supported by the National Research, Development and Innovation Office - NKFIH under the grants KH 130371, SNN 129364, FK 132060, and KKP-133819.

\end{document}